\newcommand{\noun}[1]{\textsc{#1}}
\numberwithin{equation}{section}
\numberwithin{figure}{section}
\theoremstyle{plain}
\newtheorem{thm}{\protect\theoremname}[section]
  \theoremstyle{plain}
  \newtheorem{lem}[thm]{\protect\lemmaname}
  \theoremstyle{plain}
  \newtheorem{prop}[thm]{\protect\propositionname}
  \theoremstyle{remark}
  \newtheorem{rem}[thm]{\protect\remarkname}
  \theoremstyle{plain}
  \newtheorem{cor}[thm]{\protect\corollaryname}
\def\QQ{\mathbb{Q}}
\def\RR{\mathbb{R}}
\def\CC{\mathbb{C}}
\def\FF{\mathbb{F}}
\def\ZZ{\mathbb{Z}}
\def\PP{\mathbb{P}}
\def\TT{\mathbb{T}}
\def\GG{\mathbb{G}}
\def\uz{\underline{z}}
\def\W{\mathcal{W}}
\def\Z{\mathcal{Z}}
\def\CW{\mathscr{W}}
\def\CZ{\mathscr{Z}}
\def\CV{\mathscr{V}}
\def\CU{\mathscr{U}}
\def\CY{\mathscr{Y}}
\def\CX{\mathscr{X}}
\def\T{\mathcal{T}}
\def\Y{\mathcal{Y}}
\def\M{\mathcal{M}}
\def\D{\mathcal{D}}
\def\k12{\mathcal{K}_{\lambda_1,\lambda_2}}
\def\tk12{\tilde{\mathcal{K}}_{\lambda_1,\lambda_2}}
\def\ck12{\check{\mathcal{K}}_{\lambda_1,\lambda_2}}
\def\Res{\text{Res}}
\def\MHS{\mathrm{MHS}}
\def\ay{\mathbf{i}}
\theoremstyle{definition}
\theoremstyle{definition}
\theoremstyle{theorem}
\theoremstyle{theorem}
\theoremstyle{theorem}
\theoremstyle{definition}
  \providecommand{\corollaryname}{Corollary}
  \providecommand{\lemmaname}{Lemma}
  \providecommand{\propositionname}{Proposition}
  \providecommand{\remarkname}{Remark}
\providecommand{\theoremname}{Theorem}
\begin{document}

\title[Explicit higher Chow cycles]{An explicit basis for the rational higher Chow groups of abelian
number fields}

\author{Matt Kerr and Yu Yang}

\subjclass[2000]{14C25, 14C30, 19E15}
\begin{abstract}
We review and simplify A. Beilinson's construction of a basis for
the motivic cohomology of a point over a cyclotomic field, then promote
the basis elements to higher Chow cycles and evaluate the KLM regulator
map on them.
\end{abstract}
\maketitle

\section{Introduction}

Let $\zeta_{N}\in\CC^{*}$ be a primitive $N^{\text{th}}$ root of
$1$ ($N\geq2$). The seminal article \cite{Be1} of A. Beilinson
concludes with a construction of elements $\Xi_{b}$ ($b\in(\ZZ/N\ZZ)^{*}$)
in motivic cohomology
\[
H_{\M}^{1}\left(\text{Spec}(\QQ(\zeta_{N})),\QQ(n)\right)\cong K_{2n-1}^{(n)}\left(\QQ(\zeta_{N})\right)\otimes\QQ
\]
mapping to $Li_{n}(\zeta_{N}^{b})=\sum_{k\geq1}\tfrac{\zeta_{N}^{kb}}{k^{n}}\in\CC/(2\pi\ay)^{n}\RR$
under his regulator. Since by Borel's theorem \cite{Bo1} $\text{rk}K_{2n-1}^{(n)}(\QQ(\zeta_{N}))_{\QQ}=\tfrac{1}{2}\phi(N)$
(for $N\geq3$), an immediate consequence is that the $\{\Xi_{b}\}$
span $K_{2n-1}^{(n)}(\QQ(\zeta_{N}))_{\QQ}$; indeed, Beilinson's
results anticipated the eventual proofs \cite{Ra,Bu} of the equality
(for number fields) of his regulator with that of Borel \cite{Bo2}.
An expanded account of his construction was written up by Neukirch
(with Rapoport and Schneider) in \cite{Ne}, up to a ``crucial lemma''
((2.4) in {[}op. cit.{]}) required for the regulator computation,
which was subsequently proved by Esnault \cite{Es}.

The intervening years have seen some improvements in technology, especially
Bloch's introduction of higher Chow groups \cite{Bl}, which yield
an \emph{integral} definition of motivic cohomology for smooth schemes
$X$. In particular, we have%
\footnote{We use the shorthand $CH^{*}(F,*)$ ($Z^{*}(F,*)$, etc.) for $CH^{*}(\text{Spec}(F),*)$
($F$ a field).%
}\begin{flalign*}H_{\M}^{1}\left(\text{Spec}(\QQ(\zeta_{N})),\ZZ(n)\right) & \cong CH^{n}\left(\QQ(\zeta_{N}),2n-1\right) \\
& :=H_{2n-1}\left\{ Z^{n}(\QQ(\zeta_{N}),\bullet),\partial\right\} ,
\end{flalign*}and can ask for explicit cycles in $\ker(\partial)\subset Z^{n}\left(\QQ(\zeta_{N}),2n-1\right)$
representing (multiples of) Beilinson's elements $\Xi_{b}$. Another
relevant development was the explicit realization of Beilinson's regulator
in \cite{KLM,KL} as a morphism $\widetilde{AJ}$ of complexes, from
a \emph{rationally} quasi-isomorphic subcomplex $Z_{\RR}^{n}(X,\bullet)$
of $Z^{n}(X,\bullet)$ to a complex computing the absolute Hodge cohomology
of $X$. Here this ``KLM morphism'' yields an Abel-Jacobi mapping\begin{equation}\label{eqIn1}AJ:\, CH^n\left( \QQ(\zeta_N ),2n-1\right) \otimes \QQ \to \CC/(2\pi \ay)^n \QQ ,
\end{equation}and in the present note we shall construct (for all $n$) higher Chow
cycles
\[
\hat{\mathscr{Z}}_{b}\in\ker(\partial)\subset Z_{\RR}^{n}(\QQ(\zeta_{N}),2n-1)\otimes\QQ
\]
with 
\[
(n-3)N^{n-1}\hat{\mathscr{Z}}_{b}\in Z_{\RR}^{n}\left(\QQ(\zeta_{N}),2n-1)\right)\;\;\;\text{and}\;\;\; AJ(\hat{\mathscr{Z}}_{b})=Li_{n}(\zeta_{N}^{b}).
\]
(See Theorems \ref{thm1}, \ref{thm2}, and \ref{thm3}, with $\hat{\mathscr{Z}}=\tfrac{(-1)^{n}}{N^{n-1}}\tilde{\mathscr{Z}}$.)
This is entirely more explicit than the constructions in \cite{Be1,Ne},
and yields a brief and transparent evaluation of the regulator, which
moreover allows us to dispense with some of the hypotheses of \cite[Lemma 2.4]{Ne}
or \cite[Theorem 3.9]{Es} and thus avoid the more complicated \inputencoding{latin1}{construction
of \cite[Lemma 3.1]{Ne}.}\inputencoding{latin9} Furthermore, in concert
with the anticipated extension of $\widetilde{AJ}$ to the entire
complex $Z^{n}(X,\bullet)$ (making \eqref{eqIn1} integral), we expect
that our cycles will be useful for studying the torsion in $CH^{n}\left(\QQ(\zeta_{N}),2n-1\right)$
as begun in \cite{Pe,Pe2}, cf. Remark \ref{rem4.2-1} and $\S$\ref{torsion}.

\subsubsection*{Acknowledgments:}

The authors gratefully acknowledge support from NSF Grant DMS-1361147,
and helpful remarks from the referee and J. Fresan.

\section{Beilinson's construction}

In this section we show that (the graph of) the $n$-tuple of functions
$\{1-\zeta_{N}z_{1}\cdots z_{n-1},\left(\tfrac{z_{1}}{z_{1}-1}\right)^{N},\ldots,\left(\tfrac{z_{n-1}}{z_{n-1}-1}\right)^{N}\}$
completes to a relative motivic cohomology class on $(\square^{n-1},\partial\square^{n-1})$.
Most of the work that follows is to show that its image under a residue
map vanishes, cf. diagram \eqref{eqn2.5-3}. It also serves to establish
notation for \inputencoding{latin1}{$\S$}\inputencoding{latin9}\ref{sec3},
where we recast this class as a higher Chow cycle and compute its
regulator.

\subsection{Notation}

Let $N\geq2$, and $\zeta\in\CC$ be a primitive $N^{\text{th}}$
root of unity; i.e., $\zeta=e^{\frac{2\pi\ay a}{N}}$, where $a$
is coprime to $N$. Denoting by $\Phi_{N}(x)$ the $N^{\text{th }}$
cyclotomic polynomial, each such $a$ yields an embedding $\sigma$
of $\FF:=\tfrac{\QQ[\omega]}{(\Phi_{N}(\omega))}$ into $\CC$ (by
sending $\omega\mapsto\zeta$). (If $N=2$, then $\FF=\QQ$ and $\omega=\zeta=-1$.)

Working over any subfield of $\CC$ containing $\zeta$, write
\[
\square^{n}:=\left(\PP^{1}\backslash\{1\}\right)^{n}\supset\left(\PP^{1}\backslash\{0,1\}\right)^{n}=:\TT^{n},
\]
with coordinates $(z_{1},\ldots,z_{n})$. We have isomorphisms from
$\TT^{n}$ to $\GG_{m}^{n}$ (with coordinates $(t_{1},\ldots,t_{n})$),
given by $t_{i}:=\tfrac{z_{i}}{z_{i}-1}$. Define a function $f_{n}(\uz):=1-\zeta^{b}t_{1}\cdots t_{n}$
on $\TT^{n}$ (with $b$ coprime to $N$), and normal crossing subschemes
\[
S^{n}:=\left\{ \left.\uz\in\TT^{n}\:\right|\,\text{some }z_{i}=\infty\right\} \subset S^{n}\cup\left|(f_{n})_{0}\right|=:\tilde{S}^{n}\subset\TT^{n}.
\]
(Alternatively, we may view these schemes as defined over $\FF$ by
replacing $\zeta^{b}$ with $\omega^{b}$.)

Now consider the morphism\begin{flalign*}\imath_n : & \;\;\;\;\;\;\;\;\; \TT^{n-1}  \;\;\;\;\; \longrightarrow \;\;\;\;\;\TT^n \\
& \left( t_1,\ldots ,t_{n-1} \right) \; \longmapsto  \left( t_1,\ldots ,t_{n-1},(\zeta^b t_1 \cdots t_{n-1})^{-1} \right) .
\end{flalign*}We record the following:
\begin{lem}
\label{lem0}$\imath_{n}$ sends $\TT^{n-1}$ isomorphically onto
$\left|(f_{n})_{0}\right|$, with $\imath_{n}(\tilde{S}^{n-1})=\left|(f_{n})_{0}\right|\cap S^{n}$.
\end{lem}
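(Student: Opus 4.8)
The plan is to verify the two claimed equalities by a direct computation with coordinates, checking that $\imath_n$ lands in the vanishing locus of $f_n$, that it is a closed immersion, and that it is surjective onto $|(f_n)_0|$; the compatibility with the normal-crossing boundary strata is then a short matching exercise. First I would compute $f_n \circ \imath_n$: plugging $\left(t_1,\ldots,t_{n-1},(\zeta^b t_1\cdots t_{n-1})^{-1}\right)$ into $f_n(\uz) = 1 - \zeta^b t_1\cdots t_n$ gives $1 - \zeta^b t_1\cdots t_{n-1}\cdot(\zeta^b t_1\cdots t_{n-1})^{-1} = 1-1 = 0$, so the image of $\imath_n$ is contained in $|(f_n)_0|$. (Note this uses that on $\TT^{n-1}$ each $t_i \in \GG_m$, so the product $\zeta^b t_1\cdots t_{n-1}$ is a unit and its inverse is a well-defined point of $\PP^1\setminus\{0,1\}$ — one should check it never equals $0$, $1$, or $\infty$: it is never $0$ or $\infty$ since $t_i\ne 0,\infty$, and it equals $1$ exactly when $f_n$-type relations force $z_i=\infty$, which is excluded on $\TT^{n-1}$; this is the one place a small case-check is needed.)

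Next I would show $\imath_n$ is an isomorphism onto $|(f_n)_0|$. Injectivity and the closed-immersion property are immediate because $\imath_n$ is a section of the projection $\pi\colon \TT^n\to\TT^{n-1}$ forgetting the last coordinate (i.e. $\pi\circ\imath_n = \mathrm{id}$), so it is automatically a monomorphism onto its image. For surjectivity onto $|(f_n)_0|$: if $\uz=(t_1,\ldots,t_n)\in\TT^n$ satisfies $f_n(\uz)=0$, then $\zeta^b t_1\cdots t_n = 1$, hence $t_n = (\zeta^b t_1\cdots t_{n-1})^{-1}$, which says exactly $\uz = \imath_n(t_1,\ldots,t_{n-1})$; conversely $\pi$ restricted to $|(f_n)_0|$ is the inverse morphism. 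Since $|(f_n)_0|$ is cut out by the single equation $\zeta^b t_1\cdots t_n - 1 = 0$ inside $\TT^n$ and this equation is visibly irreducible and reduced (linear in $t_n$ with unit leading coefficient), $|(f_n)_0|\cong\TT^{n-1}$ scheme-theoretically via $\imath_n$.

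Finally, for the boundary statement $\imath_n(\tilde S^{n-1}) = |(f_n)_0|\cap S^n$: a point of $|(f_n)_0|$, written as $\imath_n(t_1,\ldots,t_{n-1})$ with $(t_1,\ldots,t_{n-1})\in\TT^{n-1}$, lies in $S^n$ iff one of its $n$ coordinates is $\infty$. In the $z$-coordinates, $z_i=\infty$ corresponds to $t_i = \tfrac{z_i}{z_i-1}=1$; but $t_i = 1$ is excluded for $i\le n-1$ since $t_i\in\GG_m$ avoids... wait — here I must be careful to use the correct identification: $S^{n-1}\subset\TT^{n-1}$ is $\{$some $z_i=\infty\}$, and $\tilde S^{n-1} = S^{n-1}\cup|(f_{n-1})_0|$. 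Translating, I would check coordinate by coordinate that the last coordinate of $\imath_n$ hits $\infty$ (in the $z_n$-picture) precisely on $|(f_{n-1})_0|\subset\TT^{n-1}$ — this is the content of the identity $1-\zeta^b t_1\cdots t_{n-1}$ appearing as (a form of) the numerator of $1-z_n$ — while the first $n-1$ coordinates hit the boundary exactly on $S^{n-1}$. I expect \textbf{this last bookkeeping step — correctly tracking the $\PP^1\setminus\{0,1\}$ versus $\PP^1\setminus\{1\}$ distinction and the $z\leftrightarrow t$ change of variables across all $n$ coordinate charts} — to be the only real source of friction; the vanishing of $f_n\circ\imath_n$ and the section/closed-immersion argument are formal.
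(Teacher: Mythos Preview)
Your proposal is correct and is exactly the routine coordinate verification the paper has in mind; in fact the paper gives \emph{no} proof of this lemma at all, simply stating it and moving on, so your argument is the implicit one. The only wobble is the parenthetical worry that the last $t$-coordinate might equal $1$: since $\TT^n\cong\GG_m^n$ via $t_i=\tfrac{z_i}{z_i-1}$, the value $t_n=1$ corresponds to $z_n=\infty$, which \emph{is} allowed in $\TT^n$ (and is precisely what produces the $|(f_{n-1})_0|$ piece of $\tilde S^{n-1}$ in the boundary match), so you only need $t_n\neq 0,\infty$, which is automatic from $t_1,\ldots,t_{n-1}\in\GG_m$.
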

We also remark that the Zariski closure of $\imath_{n}(\TT^{n-1})$
in $\square^{n}$ is just $\imath_{n}(\TT^{n-1})$.

\subsection{Results for Betti cohomology}

The construction just described has quite pleasant cohomological properties,
as we shall now see.
\begin{lem}
\label{lem1}As a $\QQ$-MHS, $H^{q}\left(\TT^{n},S^{n}\right)\cong\left\{ \begin{array}{cc}
\QQ(-n) & ,\; q=n\\
0 & ,\; q\neq n
\end{array}.\right.$\end{lem}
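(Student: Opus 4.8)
The plan is to compute $H^q(\TT^n, S^n)$ by a Künneth/induction argument, exploiting the product structure $\TT^n \cong \GG_m^n$ together with the fact that $S^n$ is the union of the ``coordinate divisors at infinity.'' First I would record the base case: for $n=1$, $\TT^1 = \PP^1 \setminus\{0,1\}$ and $S^1 = \{\infty\}$, so the long exact sequence of the pair, together with $H^0(\TT^1)=\QQ(0)$, $H^1(\TT^1)\cong\QQ(-1)^{\oplus 2}$ (generated by $\tfrac{dz}{z}$ and $\tfrac{dz}{z-1}$, say), and $H^0(\{\infty\})=\QQ(0)$, gives $H^1(\TT^1,\{\infty\})\cong\QQ(-1)$ and $H^q=0$ otherwise — the point being that the restriction $H^0(\TT^1)\to H^0(\{\infty\})$ is an isomorphism, killing one copy of $\QQ(0)$ and one copy of $\QQ(-1)$. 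More conceptually, $H^\bullet(\TT^1,\{\infty\})$ is the reduced cohomology of $\PP^1\setminus\{0,1\}$ relative to one puncture, which is $\QQ(-1)$ in degree $1$.

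Next I would set up the inductive step. Write $\TT^n = \TT^{n-1}\times\TT^1$ and observe that $S^n = (S^{n-1}\times\TT^1)\cup(\TT^{n-1}\times\{\infty\})$, i.e. $S^n$ is the ``total boundary'' of the product. The clean way to package this is via the relative Künneth formula: for pairs $(X,A)$ and $(Y,B)$ with $A,B$ suitably nice (which they are here, being smooth normal crossing),
\[
H^\bullet\bigl(X\times Y,\, (A\times Y)\cup(X\times B)\bigr) \;\cong\; H^\bullet(X,A)\otimes H^\bullet(Y,B)
\]
as $\QQ$-MHS. Applying this with $(X,A)=(\TT^{n-1},S^{n-1})$ and $(Y,B)=(\TT^1,\{\infty\})$, and feeding in the inductive hypothesis $H^\bullet(\TT^{n-1},S^{n-1})=\QQ(-(n-1))[-(n-1)]$ together with the base case $H^\bullet(\TT^1,\{\infty\})=\QQ(-1)[-1]$, yields $H^\bullet(\TT^n,S^n)=\QQ(-n)[-n]$, which is exactly the claim. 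The MHS structure is automatic since Künneth is a morphism of MHS and the Tate twists simply add.

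The main obstacle is justifying the relative Künneth formula with the correct mixed Hodge structure — a plain topological Künneth for the pair is standard (e.g. from the product of the relative singular cochain complexes, using that everything is of finite type), but one wants it compatibly with the weight and Hodge filtrations. I would handle this by passing to the open complements: $H^q(\TT^n,S^n)$ is dual (up to a Tate twist and degree shift, via Poincaré--Lefschetz duality on the smooth variety $\TT^n$ of dimension $n$) to the cohomology with compact supports, or alternatively one identifies $H^\bullet(\TT^n,S^n)$ with the cohomology of the open set $\GG_m^n$ relative to nothing after excision is not quite available — so instead I would argue directly that $H^\bullet(\TT^1,\{\infty\})\cong \tilde H^\bullet$ of a space with the homotopy type of $S^1$ carrying a pure Hodge--Tate structure of weight $2$, and that the Künneth isomorphism for pairs respects the mixed Hodge structures because it is induced by an external product of classes in absolute Hodge (or Deligne) cohomology, which is a tensor functor. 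An alternative, perhaps cleaner route avoiding Künneth for pairs altogether: use the Gysin/localization spectral sequence for the stratification of $\TT^n$ by the faces of $S^n$, i.e. the complex whose terms are $\bigoplus_{|I|=p} H^{\bullet}(\TT^{n-p})(-p)$ with differentials the alternating sums of restrictions; since $H^\bullet(\GG_m^n)=\bigwedge^\bullet H^1(\GG_m)$ is an exterior algebra, this spectral sequence degenerates and the relative cohomology is readily computed to be concentrated in degree $n$ with value $\QQ(-n)$. Either way the Hodge-theoretic bookkeeping is the only nontrivial part; the underlying vector-space computation is forced.
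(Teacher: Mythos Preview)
Your approach is essentially the paper's: the paper's entire proof is the one-liner ``Apply the K\"unneth formula to $(\TT^n,S^n)\cong(\GG_m,\{1\})^n$.'' The only difference is packaging --- rather than inducting, the paper observes immediately (via $t_i = z_i/(z_i-1)$, which sends $z_i=\infty$ to $t_i=1$) that the pair is an $n$-fold product of $(\GG_m,\{1\})$, so a single application of relative K\"unneth finishes. Your concern about MHS-compatibility of K\"unneth is legitimate but standard, and the paper simply takes it for granted.

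One slip in your base case: $\TT^1 = \PP^1\setminus\{0,1\}\cong\GG_m$ has $H^1\cong\QQ(-1)$, one-dimensional, not two; the forms $dz/z$ and $dz/(z-1)$ each have a pole at $\infty\in\TT^1$ and are not separately classes there. The long exact sequence of the pair then gives $H^1(\TT^1,\{\infty\})\cong H^1(\TT^1)\cong\QQ(-1)$ directly (since $H^0(\TT^1)\to H^0(\{\infty\})$ is an isomorphism and $H^1(\{\infty\})=0$), with nothing further to ``kill.'' This does not affect your conclusion or the inductive step.
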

\begin{proof}
Apply the K\"unneth formula to $(\TT^{n},S^{n})\cong(\GG_{m},\{1\})^{n}$.\end{proof}
\begin{lem}
\label{lem2}As a $\QQ$-MHS, 
\[
H^{q}\left(\TT^{n},\tilde{S}^{n}\right)\cong\left\{ \begin{array}{cc}
\QQ(0)\oplus\QQ(-1)\oplus\cdots\oplus\QQ(-n) & ,\; q=n\\
0 & ,\; q\neq n
\end{array}.\right.
\]
\end{lem}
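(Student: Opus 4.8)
The plan is to compute $H^\bullet(\TT^n, \tilde S^n)$ by relating it, via an excision/Gysin argument, to the already-known cohomology of $(\TT^n, S^n)$ (Lemma \ref{lem1}) and the cohomology of the pair $(\tilde S^n, S^n)$. The inclusion $S^n \hookrightarrow \tilde S^n$ of normal crossing subschemes of $\TT^n$ gives a short exact sequence of pairs, hence a long exact sequence
\[
\cdots \to H^q(\TT^n, \tilde S^n) \to H^q(\TT^n, S^n) \to H^q(\tilde S^n, S^n) \to H^{q+1}(\TT^n, \tilde S^n) \to \cdots
\]
in the category of $\QQ$-MHS. So the whole computation reduces to understanding $H^\bullet(\tilde S^n, S^n)$ together with the connecting map.

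\medskip

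First I would identify $H^\bullet(\tilde S^n, S^n)$. Since $\tilde S^n = S^n \cup |(f_n)_0|$, excision (or the Mayer--Vietoris-type description for normal crossing pairs) identifies $H^q(\tilde S^n, S^n)$ with $H^q\bigl(|(f_n)_0|, |(f_n)_0|\cap S^n\bigr)$. Now Lemma \ref{lem0} gives an isomorphism $\imath_n : (\TT^{n-1}, \tilde S^{n-1}) \xrightarrow{\sim} (|(f_n)_0|, |(f_n)_0|\cap S^n)$ of pairs. Hence
\[
H^q(\tilde S^n, S^n) \cong H^q(\TT^{n-1}, \tilde S^{n-1}),
\]
and the problem becomes inductive: by the inductive hypothesis this is $\bigoplus_{j=0}^{n-1}\QQ(-j)$ concentrated in degree $q = n-1$, while the base case $n=1$ reads $H^\bullet(\GG_m, \{1\}\cup\{(\zeta^b)^{-1}\text{-point, wait}\})$ — more precisely $\tilde S^1 = \{\infty\}\cup\{\zeta^{-b}\}$, so $H^1(\TT^1,\tilde S^1) \cong \QQ(0)\oplus\QQ(-1)$, which one checks directly from the long exact sequence of $(\GG_m, \text{two points})$.

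\medskip

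Feeding this into the long exact sequence above: $H^q(\TT^n, S^n)$ is $\QQ(-n)$ in degree $n$ and $0$ elsewhere, while $H^q(\tilde S^n, S^n)$ is $\bigoplus_{j=0}^{n-1}\QQ(-j)$ in degree $n-1$ and $0$ elsewhere. The only possibly nonzero connecting map is $H^{n-1}(\tilde S^n, S^n) \to H^n(\TT^n, \tilde S^n)$, and for degree reasons the sequence forces $H^q(\TT^n, \tilde S^n) = 0$ for $q \neq n$ and gives an exact sequence
\[
0 \to H^{n-1}(\tilde S^n, S^n) \to H^n(\TT^n, \tilde S^n) \to H^n(\TT^n, S^n) \to 0,
\]
i.e. $0 \to \bigoplus_{j=0}^{n-1}\QQ(-j) \to H^n(\TT^n,\tilde S^n) \to \QQ(-n) \to 0$. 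This is an extension of $\QQ(-n)$ by a sum of smaller Tate twists, and since $\mathrm{Ext}^1_{\MHS}(\QQ(-n),\QQ(-j)) = \mathrm{Ext}^1_{\MHS}(\QQ(0),\QQ(n-j))$ can be nonzero, the main obstacle is to show the extension splits, so that $H^n(\TT^n,\tilde S^n) \cong \bigoplus_{j=0}^{n}\QQ(-j)$ as claimed. The cleanest way to see this is that the whole package is defined over $\QQ$ and carries an action of $\mathrm{Gal}(\overline{\QQ}/\QQ)$ (or: the objects are motivic/Tate and the Hodge structures underlie mixed Tate motives over $\QQ$ with the expected weight filtration splitting), or more concretely that one can exhibit an explicit splitting by producing, for each $j$, a class in $H^n(\TT^n,\tilde S^n)$ mapping to the generator of the $\QQ(-j)$ summand — for instance using the product decomposition $(\TT^n, S^n)\cong(\GG_m,\{1\})^n$ to write down closed forms with logarithmic poles. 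I would therefore spend the bulk of the argument on this splitting, and treat the long exact sequence bookkeeping and the excision/Gysin identifications as routine.
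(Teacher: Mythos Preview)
Your reduction to the short exact sequence
\[
0 \to \bigoplus_{j=0}^{n-1}\QQ(-j) \to H^n(\TT^n,\tilde S^n) \to \QQ(-n) \to 0
\]
is essentially identical to the paper's: the paper phrases it as the relative cohomology sequence of the inclusion $\imath_n:(\TT^{n-1},\tilde S^{n-1})\hookrightarrow(\TT^n,S^n)$, but via Lemma~\ref{lem0} and excision this is the same long exact sequence you write down. So that part is fine.

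The gap is the splitting, and none of your three suggestions closes it. The Galois action is irrelevant for splitting a sequence of $\QQ$-MHS: $\mathrm{Ext}^1_{\MHS}(\QQ(-n),\QQ(-j))\cong\CC/(2\pi\ay)^{n-j}\QQ$ is nonzero regardless of any arithmetic structure. Invoking mixed Tate motives is circular (and the weight filtration on a mixed Tate Hodge structure does not split automatically). Finally, producing classes hitting the $\QQ(-j)$ summands for $j<n$ is beside the point, since those are already subobjects; what you need is a rational class spanning a copy of $\QQ(-n)$ inside $H^n(\TT^n,\tilde S^n)$, and the product decomposition of $(\TT^n,S^n)$ does not directly give this because $\tilde S^n$ is not a product.

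The paper's argument is a concrete period computation. The holomorphic $n$-form $\eta=\tfrac{1}{(2\pi\ay)^n}\tfrac{dt_1}{t_1}\wedge\cdots\wedge\tfrac{dt_n}{t_n}$ vanishes on every component of $\tilde S^n$ (they have dimension $n-1$), so it defines a class in $F^nH^n(\TT^n,\tilde S^n;\CC)$, which is one-dimensional since $F^n$ of the subobject is zero. The point is to show this class is \emph{rational}: the paper exhibits explicit generators $\underline{e}(U_i)$ of $H_n(\TT^n,\tilde S^n;\QQ)$ (images of pieces of the cube $[0,1]^n$ under the exponential map, cut by the hyperplanes where $f_n$ vanishes on the compact torus) and checks that $\int_{\underline{e}(U_i)}\eta\in\QQ$ by an elementary volume computation. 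Then $\QQ\cdot\eta$ is a rational line lying in $F^n$, hence a sub-MHS isomorphic to $\QQ(-n)$, and it maps isomorphically onto the quotient. That is the missing idea.
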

\begin{proof}
This is clear for $(\TT^{1},\tilde{S}^{1})\cong(\GG_{m},\{1,\bar{\zeta}\})$.
Now consider the exact sequence\begin{multline*}H^{*-1}(\TT^n,S^n)\overset{\imath^*_n}{\to} \underline{H^{*-1}(\TT^{n-1},\tilde{S}^{n-1})} \overset{\delta}{\to} \\
H^*(\TT^n,\tilde{S}^n) \to \underline{H^*(\TT^n,S^n)} \overset{\imath^*_n}{\to} H^*(\TT^{n-1},\tilde{S}^{n-1})
\end{multline*}of $\QQ$-MHS, associated to the inclusion $(\TT^{n-1},\tilde{S}^{n-1})\overset{\imath_{n}}{\hookrightarrow}(\TT^{n},S^{n})$.
(This is just the relative cohomology sequence, once one notes that
the pair $\left((\TT^{n},S^{n}),\imath_{n}(\TT^{n-1},\tilde{S}^{n-1})\right)=\left(\TT^{n},S^{n}\cup\imath_{n}(\TT^{n-1})\right)=(\TT^{n},\tilde{S}^{n})$
by Lemma \ref{lem0}.) If $*\neq n$, then the underlined terms are
$0$ via Lemma \ref{lem1} and induction. If $*=n$, then the \emph{end}
terms are $0$ via Lemma \ref{lem1} and induction, and \begin{equation}\label{eqn2.2-1}0\to H^{n-1}(\TT^{n-1},\tilde{S}^{n-1})\overset{\delta}{\to} H^n(\TT^n,\tilde{S}^n) \to H^n(\TT^n,S^n)\to 0
\end{equation}is a short-exact sequence.

Now observe that:\begin{itemize}\item{$H^n(\TT^n,S^n;\CC) = F^n H^n(\TT^n,S^n;\CC)$ is generated by the holomorphic form $\eta:=\tfrac{1}{(2\pi\ay)^n} \tfrac{dt_1}{t_1}\wedge\cdots\wedge\tfrac{dt_n}{t_n};$}\item{$H_{n-1}(\TT^{n-1},\tilde{S}^{n-1};\QQ)$ is generated by images $\underline{e}(U_i)$ of the cells $\bigcup_{i=0}^n U_i = [0,1]^n\setminus \bigcup_{\ell=1}^{n} \left\{ \sum x_i = \ell-\tfrac{a}{N} \right\} , $ where $\underline{e}: [0,1]^n \to \TT^n$ is defined by $(x_1,\ldots,x_n)\mapsto (e^{2\pi \ay x_1},\ldots ,e^{2\pi \ay x_n} ) = (t_1,\ldots, t_n)$; } 
\end{itemize} and\begin{itemize} \item{$\int_{\underline{e}(U_i)} \eta = \int_{U_i} dx_1 \wedge \cdots \wedge dx_n \in \QQ .$ }\end{itemize}(Writing
$\mathscr{S}^{1}$ for the unit circle, $\left((\mathscr{S}^{1})^{n},(\mathscr{S}^{1})^{n}\cap\tilde{S}^{n}\right)$
is a deformation retract of $(\TT^{n},\tilde{S}^{n})$. The $\underline{e}(U_{i})$
visibly yield all the relative cycles in the former, justifying the
second observation.) Together these immediately imply that \eqref{eqn2.2-1}
is split, completing the proof.
\end{proof}

\subsection{Results for Deligne cohomology}

Recall that Beilinson's absolute Hodge cohomology \cite{Be2} of an
analytic scheme $\mathrm{Y}$ over $\CC$ sits in an exact sequence\begin{multline*}0\to Ext^1_{\MHS}\left( \QQ(0),H^{r-1}(\mathrm{Y},\mathbb{A}(p))\right) \to H^r_{\D}\left( \mathrm{Y},\mathbb{A}(p)\right)  \\
\to Hom_{\MHS}\left( \QQ(0),H^r(\mathrm{Y} ,\mathbb{A}(p))\right) \to 0.
\end{multline*}(Here we use a subscript ``$\D$'' since the construction after
all is a ``weight-corrected'' version of Deligne cohomology; the
subscript ``$\MHS$'' of course means ``$\mathbb{A}$-$\MHS$''.)
We shall not have any use for details of its construction here, and
refer the reader to \cite[$\S$2]{KL}.
\begin{lem}
\label{lem3}The map $\imath_{n}^{*}:H_{\D}^{n}(\TT^{n},S^{n};\mathbb{A}(n))\to H_{\D}^{n}(\TT^{n-1},\tilde{S}^{n-1};\mathbb{A}(n))$
is zero $(\mathbb{A}=\QQ\text{ or }\RR)$.\end{lem}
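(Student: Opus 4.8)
The plan is to run the long exact sequence of $\D$-cohomology attached to the inclusion of pairs $\imath_n\colon(\TT^{n-1},\tilde{S}^{n-1})\hookrightarrow(\TT^n,S^n)$ — a morphism of pairs since $\imath_n(\tilde{S}^{n-1})\subseteq S^n$ by Lemma \ref{lem0} — i.e.\ the $\D$-cohomology analogue of the Betti sequence used in the proof of Lemma \ref{lem2}. As $\imath_n(\TT^{n-1})$ is closed in $\TT^n$ with $S^n\cup\imath_n(\TT^{n-1})=\tilde{S}^n$, excision together with Lemma \ref{lem0} identifies $H^\bullet_\D(S^n\cup\imath_n(\TT^{n-1}),S^n;\mathbb{A}(n))\cong H^\bullet_\D(\TT^{n-1},\tilde{S}^{n-1};\mathbb{A}(n))$, so I get
\begin{multline*}
\cdots\to H^n_\D(\TT^n,\tilde{S}^n;\mathbb{A}(n))\to H^n_\D(\TT^n,S^n;\mathbb{A}(n))\overset{\imath_n^*}{\to}\\
H^n_\D(\TT^{n-1},\tilde{S}^{n-1};\mathbb{A}(n))\overset{\delta}{\to}H^{n+1}_\D(\TT^n,\tilde{S}^n;\mathbb{A}(n))\to\cdots .
\end{multline*}
By exactness $\mathrm{im}(\imath_n^*)=\ker(\delta)$, so it is enough to prove that the connecting map $\delta$ is injective.

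Next I would pin down the two groups flanking $\delta$. By Lemmas \ref{lem1} and \ref{lem2} the cohomology (with $\mathbb{A}(n)$-coefficients) of each of $(\TT^n,S^n)$, $(\TT^n,\tilde{S}^n)$, $(\TT^{n-1},\tilde{S}^{n-1})$ is concentrated in a single degree — $n$, $n$, and $n-1$ respectively — so the corresponding $R\Gamma$'s are canonically shifts of single $\mathbb{A}$-MHS, and the $\mathrm{Ext}$-$\mathrm{Hom}$ sequence for absolute Hodge cohomology recalled just before Lemma \ref{lem3} collapses to canonical isomorphisms
\begin{gather*}
H^n_\D(\TT^{n-1},\tilde{S}^{n-1};\mathbb{A}(n))\cong\mathrm{Ext}^1_{\MHS}\big(\mathbb{A}(0),H^{n-1}(\TT^{n-1},\tilde{S}^{n-1};\mathbb{A}(n))\big),\\
H^{n+1}_\D(\TT^n,\tilde{S}^n;\mathbb{A}(n))\cong\mathrm{Ext}^1_{\MHS}\big(\mathbb{A}(0),H^{n}(\TT^{n},\tilde{S}^{n};\mathbb{A}(n))\big),
\end{gather*}
the $\mathrm{Hom}$-terms vanishing since $H^n(\TT^{n-1},\tilde{S}^{n-1})=0=H^{n+1}(\TT^n,\tilde{S}^n)$. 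Under these identifications $\delta$ becomes $\mathrm{Ext}^1_{\MHS}(\mathbb{A}(0),-)$ applied to the Betti connecting map $\delta_B\colon H^{n-1}(\TT^{n-1},\tilde{S}^{n-1};\mathbb{A}(n))\to H^{n}(\TT^{n},\tilde{S}^{n};\mathbb{A}(n))$ of that same triple. (In passing, the $\mathrm{Ext}$-$\mathrm{Hom}$ sequence also shows directly that $\mathrm{im}(\imath_n^*)$ lies in the $\mathrm{Ext}^1$-subgroup, its image in the $\mathrm{Hom}$-quotient $\mathrm{Hom}_{\MHS}(\mathbb{A}(0),H^n(\TT^{n-1},\tilde{S}^{n-1};\mathbb{A}(n)))$ being forced to vanish.)

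Finally I would invoke that $\delta_B$ is a \emph{split} monomorphism of $\mathbb{A}$-MHS: this is precisely the short exact sequence \eqref{eqn2.2-1} of Lemma \ref{lem2}, together with the splitting produced there from $\eta$ and the rational cells $\underline{e}(U_i)$. Since any additive functor, in particular $\mathrm{Ext}^1_{\MHS}(\mathbb{A}(0),-)$, sends split monomorphisms to monomorphisms, $\delta$ is injective, hence $\mathrm{im}(\imath_n^*)=\ker(\delta)=0$ and $\imath_n^*=0$; the argument is verbatim the same for $\mathbb{A}=\QQ$ and $\mathbb{A}=\RR$. The one step that needs care, and which I regard as the crux, is the identification of the $\D$-connecting map $\delta$ with $\mathrm{Ext}^1_{\MHS}(\mathbb{A}(0),\delta_B)$: this is the functoriality of the absolute Hodge formalism of \cite[$\S$2]{KL} (realizing $\D$-cohomology as $\mathrm{Hom}$ in the derived category of mixed Hodge structures) applied to the distinguished triangle of the triple $(\TT^n;\tilde{S}^n,S^n)$, and it is transparent here precisely because each of the relevant complexes has cohomology in a single degree, so that no choice of splitting of a complex of $\mathbb{A}$-MHS ever enters. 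Everything else is bookkeeping with Lemmas \ref{lem1}--\ref{lem2}.
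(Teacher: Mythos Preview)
Your proof is correct and follows essentially the same approach as the paper: use the long exact sequence in Deligne cohomology for the triple, reduce to showing the connecting map $\delta_{\D}$ is injective, identify both flanking groups with $\mathrm{Ext}^1_{\MHS}$-terms via Lemma~\ref{lem2}, and conclude injectivity from the splitting of the Betti sequence~\eqref{eqn2.2-1}. Your write-up is more explicit than the paper's about the identification of $\delta_{\D}$ with $\mathrm{Ext}^1_{\MHS}(\mathbb{A}(0),\delta_B)$ and about why the $\mathrm{Hom}$-terms vanish, but the argument is the same.
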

\begin{proof}
Consider the exact sequence
\[
\to H_{\D}^{n}(\TT^{n},S^{n};\QQ(n))\overset{\imath_{n}^{*}}{\to}H_{\D}^{n}(\TT^{n-1},\tilde{S}^{n-1};\QQ(n))\overset{\delta_{\D}}{\to}H_{\D}^{n+1}(\TT^{n},\tilde{S}^{n};\QQ(n))\to.
\]
It suffices to show that $\delta_{\D}$ is injective. Now\begin{flalign*}
Hom_{\MHS}\left(\QQ(0),H^n(\TT^{n-1},\tilde{S}^{n-1};\QQ(n))\right) & = \{0\}\\
Hom_{\MHS}\left(\QQ(0),H^{n+1}(\TT^n,\tilde{S}^n;\QQ(n))\right) & = \{0\}
\end{flalign*}by Lemma \ref{lem2}, and so $\delta_{\D}$ is given by\small
\[
Ext_{\MHS}^{1}\left(\QQ(0),H^{n-1}(\TT^{n-1},\tilde{S}^{n-1};\QQ(n))\right)\overset{\delta_{\D}}{\to}Ext_{\MHS}^{1}\left(\QQ(0),H^{n}(\TT^{n},\tilde{S}^{n};\QQ(n))\right).
\]
\normalsize Since \eqref{eqn2.2-1} is split, the corresponding sequence
of $Ext^{1}$-groups is exact, and $\delta_{\D}$ is injective.
\end{proof}

\subsection{Results for motivic cohomology}

Let $X$ be any smooth simplicial scheme (of finite type), defined
over a subfield of $\CC$. We have Deligne class maps ($\mathbb{A}=\QQ$
or $\RR$)
\[
c_{\D,\mathbb{A}}:\, H_{\M}^{r}(X,\QQ(p))\to H_{\D}^{r}(X_{\CC}^{\text{an}},\mathbb{A}(p)).
\]
The case of particular interest here is where $r=1$, $X$ is a point,
and\begin{equation}\label{eqn2.4-1}c_{\D,\mathbb{A}}(\mathrm{Z})=\tfrac{1}{(2\pi \ay)^{p-1}} \int_{\mathrm{Z}^{\text{an}}_{\CC}} R_{2p-1} \in \CC/\mathbb{A}(p) \, ,
\end{equation}where (interpreting $\log(z)$ as the $0$-current with branch cut
along $T_{z}:=z^{-1}(\RR_{-})$)\begin{multline}\label{eqn2.4-2}R_{2p-1} := \sum_{k=1}^{2p-1} (2\pi\ay)^{k-1} R^{(k)}_{2p-1} \\
:= \sum_{k=1}^{2p-1} (2\pi\ay)^{k-1} \log(z_k)\tfrac{dz_{k+1}}{z_{k+1}}\wedge \cdots \wedge \tfrac{dz_{2p-1}}{z_{2p-1}} \cdot \delta_{T_{z_1} \cap \cdots \cap T_{z_{k-1}}}
\end{multline}is the regulator current of \cite{KLM,KL}, belonging to $D^{2p-2}\left((\PP^{1})^{\times(2p-1)}\right)$.
Here it is essential that the representative higher Chow cycle $\mathrm{Z}$
belong to the quasi-isomorphic subcomplex $Z_{\RR}^{p}(\text{pt.},\bullet)_{\QQ}\subset Z^{p}(\text{pt.},\bullet)_{\QQ}$
comprising cycles in good position with respect to certain real analytic
chains, cf. \cite[$\S$ 8]{KL} or Remark \ref{rem3.1-1} below.

Now take a number field $K$, $[K:\QQ]=d=r_{1}+2r_{2}$, and set 
\[
d_{m}=d_{m}(K):=\left\{ \begin{array}{cc}
r_{1}+r_{2}-1 & ,\; m=1\\
r_{1}+r_{2} & ,\; m>1\text{ odd}\\
r_{2} & ,\; m>0\text{ even}
\end{array}\right..
\]
For $X$ defined over $K$, write $\widetilde{X_{\CC}^{\text{an}}}:=\coprod_{\sigma\in Hom(K,\CC)}\left(^{\sigma}X\right)_{\CC}^{\text{an}}$
and \[\xymatrix{H^r_{\M}\left(X,\QQ(p)\right) \ar [rd]_{\tilde{c}_{\D,\RR}^+} \ar [rr]^{\tilde{c}_{\D,\RR}} & & H^r\left( \widetilde{X^{\text{an}}_{\CC}},\RR(p)\right) \\ & H^r_{\D}\left(\widetilde{X^{\text{an}}_{\CC}},\RR(p)\right)^+ \ar @{^(->} [ur] }\]for
the map sending $\mathrm{Z}\mapsto\left(c_{\D,\RR}(^{\sigma}\mathrm{Z})\right)_{\sigma}$,
which factors through the invariants under de Rham conjugation. If
$X=\text{Spec}(K)$, then we have $H_{\D}^{1}(\widetilde{X_{\CC}^{\text{an}}},\RR(p))\cong\RR(p-1)^{\oplus d}$
and $H_{\D}^{1}(\widetilde{X_{\CC}^{\text{an}}},\RR(p))^{+}\cong\RR(p-1)^{\oplus d_{p}}$.
Write $H_{\M}^{r}(X,\RR(p))=H_{\M}^{r}(X,\QQ(p))\otimes_{\QQ}\RR$.
\begin{lem}
\label{lem4}For $X=\mathrm{Spec}(K)$, $\GG_{m,K}^{\times n}$, $(\TT_{K}^{n},S_{K}^{n})$,
or $(\TT_{K}^{n},\tilde{S}_{K}^{n})$, 
\[
\tilde{c}_{\D,\RR}^{+}\otimes\RR:\, H_{\M}^{r}(X,\RR(p))\to H_{\D}^{r}(\widetilde{X_{\CC}^{\text{an}}},\RR(p))^{+}
\]
is an isomorphism $(\forall r,p)$.\end{lem}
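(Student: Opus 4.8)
The plan is to reduce the assertion to the case $X=\mathrm{Spec}(K)$, where it is in essence Borel's theorem, and then to transport it to the other three spaces by running---now for motivic cohomology---the inductive arguments already used in the proofs of Lemmas~\ref{lem1}--\ref{lem3}. The device that makes the transport go through is that the Deligne class maps $c_{\D,\mathbb{A}}$ (of which $\tilde{c}^{+}_{\D,\RR}$ is the avatar relevant over $K$) are compatible with exterior products and with the connecting homomorphisms of relative cohomology; I shall take this for granted, since in \cite{KLM,KL} the regulator is realized as an honest morphism of complexes, and so induces morphisms of the associated long exact sequences.

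\emph{Base case, and the cases $\GG^{\times n}_{m,K}$ and $(\TT^n_K,S^n_K)$.} For $X=\mathrm{Spec}(K)$ the only nontrivial content is at $r=1$, where for $p\ge 2$ Borel's theorem \cite{Bo1} gives $\dim_{\QQ}H^1_{\M}(\mathrm{Spec}(K),\QQ(p))=d_p$, matching the dimension of the target recorded above (for $p=1$ this is the Dirichlet unit theorem). That $\tilde{c}^{+}_{\D,\RR}$ is moreover injective---hence, by the dimension count, an isomorphism after $\otimes_{\QQ}\RR$---follows from the identification of the Beilinson regulator with a nonzero rational multiple of Borel's \cite{Bo2}, due to Rapoport \cite{Ra} and Burgos \cite{Bu}, together with Borel's injectivity statement for the latter. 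Now $\GG_m$ carries the split Tate motive $\QQ(0)\oplus\QQ(1)[1]$ (split by the unit section), so the relative motive of $(\GG_m,\{1\})^{\times n}$ is $\QQ(n)[n]$ and that of $\GG^{\times n}_m$ is $\bigoplus_{j=0}^{n}\QQ(j)[j]^{\oplus\binom{n}{j}}$, compatibly with the corresponding decompositions of the Deligne and of the Betti cohomology (cf.\ Lemma~\ref{lem1}). Via the isomorphism $(\TT^n,S^n)\cong(\GG_m,\{1\})^{\times n}$ furnished by $t_i=\tfrac{z_i}{z_i-1}$, this exhibits $H^{r}_{\M}(\GG^{\times n}_{m,K},\QQ(p))$ and $H^{r}_{\M}((\TT^n_K,S^n_K),\QQ(p))\cong H^{r-n}_{\M}(\mathrm{Spec}(K),\QQ(p-n))$ as direct sums of twisted shifts of the motivic cohomology of $\mathrm{Spec}(K)$, and identifies $\tilde{c}^{+}_{\D,\RR}$ summand-by-summand with the base case.

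\emph{The case $(\TT^n_K,\tilde{S}^n_K)$.} Induct on $n$; for $n=1$ the pair is $(\GG_m,\{1,\bar\zeta\})$, which is handled by a short exact sequence together with the previous paragraph. For the inductive step, apply $H^{\bullet}_{\M}(-,\QQ(p))$ to the triple $(\TT^{n-1},\tilde{S}^{n-1})\overset{\imath_n}{\hookrightarrow}(\TT^n,S^n)$---equivalently to the pair $(\TT^n,\tilde{S}^n)$ over $(\TT^n,S^n)$, by Lemma~\ref{lem0}---to obtain the motivic counterpart of the long exact sequence used in the proof of Lemma~\ref{lem2}; tensor it with $\RR$, and map it via $c_{\D,\RR}$ (which commutes with the connecting maps $\delta$ and $\imath_n^*$) to the Deligne sequence underlying Lemmas~\ref{lem2} and~\ref{lem3}. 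The vertical map is an isomorphism on the terms $H^{\bullet}_{\M}((\TT^n,S^n),\cdot)$ by the previous paragraph and on the terms $H^{\bullet}_{\M}((\TT^{n-1},\tilde{S}^{n-1}),\cdot)$ by the inductive hypothesis, so the five lemma forces it to be an isomorphism on $H^{\bullet}_{\M}((\TT^n,\tilde{S}^n),\cdot)$. Note that one does \emph{not} need the motivic sequence to split (it generally will not, its extension classes lying in nonzero groups $H^1_{\M}(\mathrm{Spec}(K),\QQ(j))$): exactness plus the five lemma already suffice.

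\emph{Expected main obstacle.} Once the base case is granted---a citation of \cite{Bo1,Bo2,Ra,Bu}---the rest is formal, \emph{provided} one knows that $c_{\D,\mathbb{A}}$ is a morphism of cohomology theories respecting products and connecting homomorphisms. The step I expect to cost the most effort is verifying that the relevant squares actually commute---in particular the one pairing the residue map $\imath_n^*$ in motivic cohomology with its Deligne counterpart appearing in Lemma~\ref{lem3}; the remainder is the five lemma and bookkeeping of Tate twists.
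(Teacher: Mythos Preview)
Your argument is correct, but the route diverges from the paper's in the last two cases. For $\GG_{m,K}^{\times n}$ the paper obtains the same splitting you invoke, but phrases it via the localization sequence for $\GG_{m,Y}\subset\mathbb{A}^1_Y$ split by a rational point and the homotopy property, rather than by appealing to a Tate-motive decomposition; the content is the same. The genuine difference is in handling the pairs $(\TT^n_K,S^n_K)$ and $(\TT^n_K,\tilde S^n_K)$: the paper treats them \emph{uniformly} by resolving each as a cosimplicial normal-crossing scheme $X^{\bullet}$ whose terms $X^i$ are disjoint unions of powers of $\GG_{m,K}$, and then comparing the spectral sequences $E_1^{i,j}=H^{2p+j}_{\mathcal K}(X^i,\RR(p))\Rightarrow H^{2p+i+j}_{\mathcal K}(X^{\bullet},\RR(p))$ for $\mathcal K=\M,\D$. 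Since $c_{\D,\RR}$ is an isomorphism on each $E_1$-term by the $\GG_m^{\times n}$ case, it is one on the abutment---no induction on $n$, no five lemma, and the argument for $(\TT^n,\tilde S^n)$ does not feed back through $(\TT^{n-1},\tilde S^{n-1})$. Your approach instead mirrors the Betti computation of Lemma~\ref{lem2}: it is more elementary (only long exact sequences) and perhaps conceptually cleaner given what has already been set up, but it costs an induction and a separate $n=1$ base case. The compatibility you flag as the ``main obstacle'' is needed in both proofs (for spectral sequences in the paper's version, for connecting maps in yours) and is a standard consequence of realizing the regulator as a morphism of complexes as in \cite{KLM,KL}.
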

\begin{proof}
By \cite{Bu}, the composition
\[
K_{2p-1}(\mathcal{O}_{K})\otimes\QQ\overset{\cong}{\to}H_{\M}^{1}(\text{Spec}(K),\QQ(p))\overset{\tilde{c}_{\D,\RR}^{+}}{\to}\RR(p-1)^{\oplus d_{p}}\overset{\cdot\tfrac{2}{(2\pi\ay)^{p-1}}}{\to}\RR^{d_{p}}
\]
is exactly the Borel regulator (and the groups are zero for $r\neq1$).
The lemma follows for $X=\text{Spec}(K)$.

Let $Y$ be a smooth quasi-projective variety, defined over $K$,
and pick $p\in\GG_{m}(K)$. Write $Y\overset{\imath}{\hookrightarrow}\GG_{m,Y}\overset{\jmath}{\hookrightarrow}\mathbb{A}_{Y}^{1}\overset{\kappa}{\hookleftarrow}Y$
for the Cartesian products with $Y$ of the morphisms $\text{Spec}(K)\overset{\imath_{p}}{\hookrightarrow}\GG_{m,K}\overset{\jmath}{\hookrightarrow}\mathbb{A}_{K}^{1}\overset{\imath_{0}}{\hookleftarrow}\text{Spec}(K)$.
Then by the homotopy property, $\imath^{*}:H_{\mathcal{K}}^{r}(\GG_{m,Y},\RR(p))\to H_{\mathcal{K}}^{r}(Y,\RR(p))\cong H_{\mathcal{K}}^{r}(\mathbb{A}_{Y}^{1},\RR(p))$
splits the localization sequence
\[
\overset{\kappa_{*}}{\to}H_{\mathcal{K}}^{r}(\mathbb{A}_{Y}^{1},\RR(p))\overset{\jmath^{*}}{\to}H_{\mathcal{K}}^{r}(\GG_{m,Y},\RR(p))\overset{\Res}{\to}H_{\mathcal{K}}^{r-1}(Y,\RR(p-1))\overset{\kappa_{*}}{\to}
\]
for $\mathcal{K}=\M,\D$ (in particular, $\kappa_{*}=0$). It follows
that 
\[
H_{\mathcal{K}}^{r}(\GG_{m,Y},\RR(p))\cong H_{\mathcal{K}}^{r}(Y,\RR(p))\oplus H_{\mathcal{K}}^{r-1}(Y,\RR(p-1)),
\]
compatibly with $c_{\D,\RR}$; applying this iteratively gives the
lemma for $\GG_{m,K}^{\times n}$.

Finally, both $(\TT^{n},S_{K}^{n})$ and $(\TT_{K}^{n},\tilde{S}_{K}^{n})$
may be regarded as (co)simplicial normal crossing schemes $X^{\bullet}$.
(That is, writing $\tilde{S}_{K}^{n}=\cup Y_{i}$, we take $X^{0}=\TT_{K}^{n}$,
$X^{1}=\coprod_{i}Y_{i}$, $X^{2}=\coprod_{i<j}Y_{i}\cap Y_{j}$,
etc.) We have spectral sequences $E_{1}^{i,j}=H_{\mathcal{K}}^{2p+j}(X^{i},\RR(p))\implies H_{\mathcal{K}}^{2p+i+j}(X^{\bullet},\RR(p)),$
compatible with $c_{\D,\RR}$, and all $X^{i}$ are disjoint unions
of powers of $\GG_{m,K}$. Lemma is proved.\end{proof}
\begin{lem}
\label{lem5}The map $\imath_{n}^{*}:\, H_{\M}^{n}(\TT^{n},S^{n};\mathbb{A}(n))\to H_{\M}^{n}(\TT^{n-1},\tilde{S}^{n-1};\mathbb{A}(n))$
is zero $(\mathbb{A=\QQ\text{ or }\RR})$.\end{lem}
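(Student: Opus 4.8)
The plan is to obtain the stated vanishing as a formal consequence of the two preceding lemmas. Lemma \ref{lem3} already gives the vanishing of the pullback $\imath_n^*$ in Deligne cohomology, and Lemma \ref{lem4} identifies, for precisely the schemes at hand, motivic cohomology with the de Rham-invariant part of Deligne cohomology via the class map. So once naturality is in place there is nothing left to compute; the whole argument is a diagram chase spread across embeddings.

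Here are the steps. First, note that $\imath_n$ is a morphism of $\FF$-schemes $(\TT_\FF^{n-1},\tilde{S}_\FF^{n-1})\hookrightarrow(\TT_\FF^n,S_\FF^n)$ (as noted in \S2, these schemes and $\imath_n$ may be taken over $\FF$, with $\omega^b$ replacing $\zeta^b$). Functoriality of the Deligne class map, together with its equivariance under de Rham conjugation (which holds because $\imath_n$ is defined over $\FF$), yields a commutative square in which $\imath_n^*$ on $H^n_\M(-,\RR(n))$ is carried to $\imath_n^*$ on $H^n_\D(\widetilde{(-)^{\mathrm{an}}_\CC},\RR(n))^{+}$ by $\tilde{c}_{\D,\RR}^{+}\otimes\RR$. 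By Lemma \ref{lem4} both vertical arrows of this square are isomorphisms, so it suffices to show that $\imath_n^*$ vanishes on the $+$-part of $H^n_\D$.

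Second, unwind the target. For $X$ over $\FF$ one has $H^n_\D(\widetilde{X^{\mathrm{an}}_\CC},\RR(n))=\bigoplus_{\sigma\in\mathrm{Hom}(\FF,\CC)}H^n_\D((^\sigma X)^{\mathrm{an}}_\CC,\RR(n))$, and $\imath_n^*$ acts on this as $\bigoplus_\sigma (^\sigma\imath_n)^*$. For each $\sigma$, the element $\sigma(\omega)$ is a primitive $N^{\mathrm{th}}$ root of unity, and $^\sigma(\TT^n,S^n)$, $^\sigma(\TT^{n-1},\tilde{S}^{n-1})$, and $^\sigma\imath_n$ are precisely the objects and the morphism of \S2 built from $\sigma(\omega)$ (with the fixed exponent $b$). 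Hence Lemma \ref{lem3}, applied once for each embedding $\sigma$ over $\CC$, shows every $(^\sigma\imath_n)^*$ is zero; therefore $\imath_n^*$ is zero on all of $H^n_\D(\widetilde{X^{\mathrm{an}}_\CC},\RR(n))$, a fortiori on its $+$-part. Combined with the first step, $\imath_n^*$ vanishes on $H^n_\M(-,\RR(n))=H^n_\M(-,\QQ(n))\otimes_\QQ\RR$; since $-\otimes_\QQ\RR$ is faithful this gives the $\mathbb{A}=\QQ$ case, and a fortiori the $\mathbb{A}=\RR$ case.

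The only point requiring care is the naturality invoked in the first step: that the KLM/absolute-Hodge class map $\tilde{c}_{\D,\RR}^{+}$ is a natural transformation on smooth (simplicial) $\FF$-schemes, so that the square for $\imath_n$ commutes and the action of $\imath_n^*$ on $H^*_\D$ decomposes compatibly both with the family of embeddings and with de Rham conjugation. This is built into the formalism of \cite{KLM,KL,Be2}, but it is what makes the argument go through; everything else is bookkeeping.
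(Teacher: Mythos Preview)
Your proof is correct and is precisely the argument the paper has in mind: the paper's own proof is the one-line ``Form the obvious commutative square and use the results of Lemmas \ref{lem3} and \ref{lem4},'' and you have faithfully unpacked this, including the decomposition over embeddings and the passage from $\RR$- to $\QQ$-coefficients.
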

\begin{proof}
Form the obvious commutative square and use the results of Lemmas
\ref{lem3} and \ref{lem4}.
\end{proof}

\subsection{The Beilinson elements}

\label{secMot}To each $I\subset\{1,\ldots,n\}$ and $\epsilon:I\to\{0,\infty\}$
we associate a face map $\rho_{I}^{\epsilon}:\square^{n-|I|}\hookrightarrow\square^{n}$,
with $z_{i}=\epsilon(i)$ ($\forall i\in I$) on the image, and degeneracy
maps $\delta_{i}:\square^{n}\twoheadrightarrow\square^{n-1}$ killing
the $i^{\text{th}}$ coordinate. For any smooth quasi-projective variety
$X$ (say, over a field $K\supseteq\QQ$), let $c^{p}(X,n)$ denote
the free abelian group on subvarieties (of codimension $p$) of $X\times\square^{n}$
meeting all faces $X\times\rho_{I}^{\epsilon}(\square^{n-|I|})$ properly,
and $d^{p}(X,n)=\sum im(\text{id}_{X}\times\delta_{i}^{*})\subset c^{p}(X,n).$
Then $Z^{p}(X,\bullet):=c^{p}(X,\bullet)/d^{p}(X,\bullet)$ defines
a complex with differential 
\[
\partial=\sum_{i=1}^{n}(-1)^{i-1}\left((\text{id}_{X}\times\rho_{i}^{0})^{*}-(\text{id}_{X}\times\rho_{i}^{\infty})^{*}\right),
\]
whose $r^{\text{th}}$ homology defines Bloch's higher Chow group\begin{equation}\label{eqn2.5-1}CH^p (X,r) \cong H^{2p-r}_{\M}(X,\ZZ(p)).
\end{equation}This isomorphism does not apply for singular varieties (e.g. our simplicial
schemes above), and for our purposes in this paper it is the right-hand
side of \eqref{eqn2.5-1} that provides the correct generalization.
In particular, we have
\[
H_{\M}^{r}\left(X\times(\square^{a},\partial\square^{a}),\QQ(p)\right)\cong H_{\M}^{r-a}(X,\QQ(p))
\]
where $\partial\square^{a}:=\bigcup_{\tiny\begin{array}{c}
i\in\{1,\ldots,a\}\\
\varepsilon\in\{0,\infty\}
\end{array}}\rho_{i}^{\varepsilon}(\square^{a-1}).$ We note here that the (rational) motivic cohomology of a cosimplicial
normal-crossing scheme $X^{\bullet}$ can be computed via (the simple
complex associated to) a double complex:\begin{equation}\label{eqn2.5-2}E_0^{a,b}:=Z^p(X^a,-b)^{\#}_{\QQ} \implies H^{2p+a+b}_{\M}(X^{\bullet},\QQ(p)),
\end{equation}where ``$\#$'' denotes cycles meeting all components of all $X^{q>a}\times\partial_{I}^{\epsilon}\square^{-b}$
properly.%
\footnote{See \cite[$\S$3]{Le} and \cite[$\S$8.2]{KL} for the relevant moving
lemmas (and for detailed discussion of differentials, etc.).%
}

Continuing to write $t_{i}$ for $\tfrac{z_{i}}{z_{i}-1}$, we shall
now consider 
\[
f(\uz)=f_{n-1}(z_{1},\ldots,z_{n-1}):=1-\omega^{b}t_{1}\cdots t_{n-1}
\]
as a regular function on $\square_{\FF}^{n-1}$, and
\[
\Z:=\left\{ \left.(\uz;f(\uz),t_{1}^{N},\ldots,t_{n-1}^{N})\,\right|\,\uz\in\square^{n-1}\backslash\left|(f)_{0}\right|\right\} 
\]
as an element of
\[
\ker\left\{ Z^{n}\left(\square^{n-1}\backslash\left|(f)_{0}\right|,n\right)_{\QQ}^{\#}\overset{\partial\oplus\sum(\rho_{i}^{\varepsilon})^{*}}{\longrightarrow}\begin{array}{c}
Z^{n}(\square^{n-1}\backslash\left|(f)_{0}\right|,n-1)\bigoplus\\
\oplus_{i,\varepsilon}Z^{n}(\square^{n-2}\backslash\left|(f|_{z_{i}=\varepsilon})_{0}\right|,n)_{\QQ}
\end{array}\right\} 
\]
hence of $H_{\M}^{n}\left(\square_{\FF}^{n-1}\backslash\left|(f)_{0}\right|,\partial\square^{n-1}\backslash\partial\left|(f)_{0}\right|;\QQ(n)\right)$
(where $\partial\left|(f)_{0}\right|:=\partial\square^{n-1}\cap\left|(f)_{0}\right|=\cup_{i,\varepsilon}\left|(f|_{z_{i}=\varepsilon})_{0}\right|,$
and ``$\#$'' indicates cycles meeting faces of $\partial\square^{n-1}\backslash\partial\left|(f)_{0}\right|$
properly). The powers $t_{i}^{N}$ are unnecessary at this stage but
will be crucial later. For simplicity, we write the class of $Z$
in this group as a symbol $\{f_{n-1},t_{1}^{N},\ldots,t_{n-1}^{N}\}$.

Using Lemma \ref{lem0}, we have a (vertical) localization exact sequence\begin{equation}\label{eqn2.5-3}\small \xymatrix{{\,} \ar[d] \\ H^n_{\M}\left(\square^{n-1},\partial \square^{n-1};\QQ(n)\right) \ar @{<->} [r]^{\cong} \ar [d] & CH^n(\FF,2n-1)_{\QQ} \\ H^n_{\M}\left(\square^{n-1}\backslash\left|(f)_{0}\right|,\partial\square^{n-1}\backslash\left|(f)_{0}\right|;\QQ(n)\right) \ar [d]^{\Res_{\left|(f)_0\right|}} \\ H^{n-1}_{\M}\left(\TT^{n-2},\tilde{S}^{n-2};\QQ(n-1)\right) \ar [d] & H^{n-1}_{\M}\left(\TT^{n-1},S^{n-1};\QQ(n-1)\right) \ar [l]_{\imath^*_{n-1}} \\ {\,} } \normalsize
\end{equation}in which evidently
\[
\Res_{\left|(f)_{0}\right|}\left\{ f_{n-1},t_{1}^{N},\ldots,t_{n-1}^{N}\right\} =\imath_{n-1}^{*}\left\{ t_{1}^{N},\ldots,t_{n-1}^{N}\right\} .
\]

\begin{prop}
\label{prop2.5-1}$\Z$ lifts to a class $\tilde{\Xi}\in CH^{n}(\FF,2n-1)_{\QQ}$.\end{prop}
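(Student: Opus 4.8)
The plan is to extract the statement directly from the vertical localization sequence of diagram~\eqref{eqn2.5-3}. By exactness of that column, the class $\Z\in H^{n}_{\M}\bigl(\square^{n-1}\setminus\left|(f)_0\right|,\,\partial\square^{n-1}\setminus\left|(f)_0\right|;\QQ(n)\bigr)$ lifts to $H^{n}_{\M}\bigl(\square^{n-1},\partial\square^{n-1};\QQ(n)\bigr)\cong CH^{n}(\FF,2n-1)_{\QQ}$ (the isomorphism being the homotopy property recorded after~\eqref{eqn2.5-1}) if and only if $\Res_{\left|(f)_0\right|}(\Z)=0$. So the whole task is to prove that this residue vanishes, and the cohomological machinery needed for that has already been assembled in Lemmas~\ref{lem1}--\ref{lem5}.

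The first step is to identify $\Res_{\left|(f)_0\right|}(\Z)$ concretely. Since $f=f_{n-1}=1-\omega^{b}t_{1}\cdots t_{n-1}$ vanishes to order one along $\left|(f)_0\right|$, while the remaining entries $t_{1}^{N},\ldots,t_{n-1}^{N}$ of the tuple defining $\Z$ are units there, the tame-symbol description of the residue gives that $\Res_{\left|(f)_0\right|}\{f_{n-1},t_{1}^{N},\ldots,t_{n-1}^{N}\}$ is the symbol built from $t_{1}^{N},\ldots,t_{n-1}^{N}$ restricted to $\left|(f)_0\right|$; composing with the isomorphism $\imath_{n-1}\colon(\TT^{n-2},\tilde{S}^{n-2})\overset{\sim}{\to}(\left|(f)_0\right|,\partial\left|(f)_0\right|)$ of Lemma~\ref{lem0} rewrites this as $\imath_{n-1}^{*}\{t_{1}^{N},\ldots,t_{n-1}^{N}\}$, exactly as asserted in the line preceding the Proposition. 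A minor point to check along the way is that $\{t_{1}^{N},\ldots,t_{n-1}^{N}\}$ genuinely defines a class in $H^{n-1}_{\M}(\TT^{n-1},S^{n-1};\QQ(n-1))$: on each branch $z_{i}=\infty$ of $S^{n-1}$ one has $t_{i}\equiv 1$, so the restricted symbol contains a constant unit entry and is degenerate, which is precisely the compatibility with $S^{n-1}$ required.

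Having done that, I would invoke Lemma~\ref{lem5} with $n$ replaced by $n-1$: the pullback $\imath_{n-1}^{*}\colon H^{n-1}_{\M}(\TT^{n-1},S^{n-1};\QQ(n-1))\to H^{n-1}_{\M}(\TT^{n-2},\tilde{S}^{n-2};\QQ(n-1))$ is zero. Therefore $\Res_{\left|(f)_0\right|}(\Z)=\imath_{n-1}^{*}\{t_{1}^{N},\ldots,t_{n-1}^{N}\}=0$, and exactness of the left-hand column of~\eqref{eqn2.5-3} yields a lift $\tilde{\Xi}\in CH^{n}(\FF,2n-1)_{\QQ}$ of $\Z$, as desired.

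The step I expect to require the most care is the first one --- carrying the identification $\Res_{\left|(f)_0\right|}(\Z)=\imath_{n-1}^{*}\{t_{1}^{N},\ldots,t_{n-1}^{N}\}$ out at the level of Bloch's cycle complexes, keeping track of the ``$\#$'' properness conditions for the simplicial pair $(\square^{n-1},\partial\square^{n-1})$, rather than only in cohomology. Everything after that is a formal consequence of Lemmas~\ref{lem0} and~\ref{lem5} together with exactness of the localization sequence.
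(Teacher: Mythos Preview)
Your proposal is correct and follows exactly the paper's approach: the paper's proof is the single line ``Apply \eqref{eqn2.5-3} and Lemma~\ref{lem5},'' and you have simply unpacked this by explaining why the residue identification $\Res_{\left|(f)_0\right|}(\Z)=\imath_{n-1}^{*}\{t_1^N,\ldots,t_{n-1}^N\}$ holds (the paper states this just before the Proposition without further comment) and then invoking Lemma~\ref{lem5} and exactness of the localization column. Your added verification that $\{t_1^N,\ldots,t_{n-1}^N\}$ is well-defined relative to $S^{n-1}$ is a useful elaboration but not a departure from the paper's argument.
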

\begin{proof}
Apply \eqref{eqn2.5-3} and Lemma \ref{lem5}.
\end{proof}
This is essentially Beilinson's construction; we normalize the class
by 
\[
\Xi:=\frac{(-1)^{n}}{N^{n-1}}\tilde{\Xi}.
\]

\section{The higher Chow cycles\label{sec3}}

\subsection{Representing Beilinson's elements}

We first describe \eqref{eqn2.5-2} more explicitly in the relevant
cases. As above, write $\partial:\, Z^{n}(\square^{r},s)_{\QQ}^{\#}\to Z^{n}(\square^{r},s-1)_{\QQ}^{\#}$
for the higher Chow differential, and
\[
\delta:\, Z^{n}(\square^{r},s)_{\QQ}^{\#}\to\bigoplus_{i,\varepsilon}Z^{n}(\square^{r-1},s)_{\QQ}^{\#}
\]
for the cosimplicial differential $\sum_{i=1}^{r}(-1)^{i-1}\left((\rho_{i}^{0}\times\text{id}_{\square^{s}})^{*}-(\rho_{i}^{\infty}\times\text{id}_{\square^{s}})^{*}\right).$
A complex of cocycles for the top motivic cohomology group in \eqref{eqn2.5-3}
is given by $\mathfrak{Z}_{\square}^{n}(k):=$\begin{equation}\label{eqn3.1-1}Z^n_{\M}\left( (\square^{n-1}_{\FF},\partial \square^{n-1}_{\FF} ),k\right)_{\QQ} := \bigoplus_{a=0}^{n-1} \bigoplus_{\tiny \begin{array}{c}(I,\epsilon)\\|I|=a\end{array}} Z^n\left( \square^{n-a-1}_{\FF},a+k\right)_{\QQ}^{\#}
\end{equation}with differential $\mathbb{D}:=\partial+(-1)^{n-a-1}\delta$. These
are, of course, the simple complex resp. total differential associated
to the natural double complex $E_{0}^{a,b}=\bigoplus_{\tiny\begin{array}{c}
(I,\epsilon)\\
|I|=a
\end{array}}Z^{n}\left(\square_{\FF}^{n-a-1},-b\right)_{\QQ}^{\#}.$ Analogously one defines $\mathfrak{Z}_{\square\backslash f}^{n}(k):=Z_{\M}^{n}\left((\square_{\FF}^{n-1}\backslash\left|(f)_{0}\right|,\partial\square_{\FF}^{n-1}\backslash\partial\left|(f)_{0}\right|),k\right)_{\QQ}$
and $\mathfrak{Z}_{f}^{n-1}(k):=Z_{\M}^{n-1}\left((\TT^{n-2},\tilde{S}^{n-2}),k\right)_{\QQ}$
so that $\mathfrak{Z}_{f}^{n-1}(\bullet)\overset{\imath_{*}}{\to}\mathfrak{Z}_{\square}^{n}(\bullet)\to\mathfrak{Z}_{\square\backslash f}^{n}(\bullet)$
are morphisms of (homological) complexes.

Now define
\[
\theta:\,\mathfrak{Z}_{\square}^{n}(k)\to Z^{n}(\FF,n+k-1)_{\QQ}
\]
by simply adding up the cycles (with no signs) on the right-hand side
of \eqref{eqn3.1-1}. (Use the natural maps $\square^{n-a-1}\times\square^{a+k}\to\square^{n+k-1}$
obtained by concatenating coordinates.) Then we have 
\begin{lem}
\label{lem6}$\theta$ is a quasi-isomorphism of complexes.\end{lem}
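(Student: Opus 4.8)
The plan is to recognize $\theta$ as the chain‑level incarnation of the homotopy‑invariance isomorphism $H^r_\M\big((\square^{n-1}_\FF,\partial\square^{n-1}_\FF),\QQ(n)\big)\cong H^{r-n+1}_\M(\mathrm{Spec}(\FF),\QQ(n))$ (iterated $n-1$ times), and to prove it is a quasi‑isomorphism by running the spectral sequence of the double complex \eqref{eqn2.5-2} underlying \eqref{eqn3.1-1}. First I would check that $\theta$ is a morphism of complexes: under a coordinate‑concatenation $\square^{n-a-1}\times\square^{a+k}\to\square^{n+k-1}$ the cosimplicial differential $\delta$ — which restricts base coordinates $z_i$ to $0$ or $\infty$ — becomes a sub‑sum of the higher‑Chow boundary $\partial$ on $Z^n(\FF,n+k-1)_\QQ$, the remaining terms of that $\partial$ coming from the internal $\square^{a+k}$‑coordinates; the content is that, after one passes to the quotient by degenerate cycles $d^n$ and inserts the prefactor $(-1)^{n-a-1}$ of $\mathbb{D}=\partial+(-1)^{n-a-1}\delta$, the single alternating sign of the higher‑Chow boundary on $Z^n(\FF,\bullet)_\QQ$ exactly reassembles these contributions, so that $\theta\circ\mathbb{D}=\partial\circ\theta$. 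This is a routine (if fiddly) verification with the conventions of \cite{Le},\cite{KL}, which I would not reproduce in full.

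Next I would introduce a section. Fix a vertex of the cube, say $\epsilon_0\equiv 0$, and let $\sigma\colon Z^n(\FF,\bullet)^{\#}_\QQ\hookrightarrow\mathfrak{Z}^n_\square(\bullet)$ be the inclusion of the summand of \eqref{eqn3.1-1} indexed by $(I=\{1,\dots,n-1\},\epsilon_0)$; there $a=n-1$, so $\delta$ vanishes, $\mathbb{D}=\partial$, and $\sigma$ is a morphism of complexes. Since concatenation with $\square^0$ is the identity, $\theta\circ\sigma$ is just the inclusion $Z^n(\FF,\bullet)^{\#}_\QQ\hookrightarrow Z^n(\FF,\bullet)_\QQ$, a quasi‑isomorphism by the moving lemmas of \cite{Le},\cite{KL}. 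So it is enough to show $\sigma$ is a quasi‑isomorphism; $\theta$ is then one as well, since on cohomology $\theta=(\theta\sigma)\circ\sigma^{-1}$.

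To see that $\sigma$ is a quasi‑isomorphism I would filter the double complex $E_0^{a,b}=\bigoplus_{|I|=a}Z^n(\square^{n-a-1}_\FF,-b)^{\#}_\QQ$ by $a$. Taking $\partial$‑homology (using the moving lemmas to discard the ``$\#$''), $E_1^{a,b}=\bigoplus_{|I|=a}CH^n(\square^{n-a-1}_\FF,-b)_\QQ$, and since each face $\square^{m}\cong\mathbb{A}^m$ the homotopy property identifies every summand with $CH^n(\FF,-b)_\QQ$; under these identifications both restrictions $(\rho_i^0)^*$ and $(\rho_i^\infty)^*$ become the identity (each is inverse to the homotopy isomorphism), so $d_1$ retains only the combinatorial signs of $\delta$ and is precisely the cellular chain differential of the cube $[0,1]^{n-1}$, tensored with $CH^n(\FF,-b)_\QQ$. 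As $[0,1]^{n-1}$ is contractible, $E_2=E_\infty$ is concentrated in the top stratum $a=n-1$, where it equals $H_0([0,1]^{n-1})\otimes CH^n(\FF,-b)_\QQ=CH^n(\FF,n+k-1)_\QQ$; moreover this identification is visibly induced by $\sigma$, since a class $[W]$ with $W\in\ker\partial\subset Z^n(\FF,n+k-1)^{\#}_\QQ$ is represented by $\sigma(W)$, supported at the vertex $\epsilon_0$, which maps to $[W]$ in $\mathrm{coker}(d_1)\otimes CH^n(\FF,\bullet)_\QQ$ because all vertices of the cube agree in $H_0$. Hence $\sigma$, and therefore $\theta$, is a quasi‑isomorphism.

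The step I expect to be the main obstacle — or rather the most error‑prone — is the sign bookkeeping of the first paragraph: pinning down the coordinate‑ordering and orientation conventions under which ``adding up with no signs'' really does intertwine $\mathbb{D}$ with the higher‑Chow differential, and checking that the degenerate‑cycle subcomplexes are respected. Everything after that is, conceptually, just iterated homotopy invariance; one could equally run the argument by induction on $n$, peeling off the last coordinate $z_{n-1}$ and invoking the homotopy/localization sequence for the single factor $\square^1_{z_{n-1}}$, which is the chain‑level form of $H^*_\M(\square^1,\partial\square^1;-)\cong H^{*-1}_\M(\mathrm{pt};-)$.
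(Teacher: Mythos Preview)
Your proof is correct and follows essentially the same route as the paper's: the paper likewise leaves the chain--map verification as routine, introduces the identical section (called $\psi$) via the vertex summand $(I,\epsilon)=(\{1,\dots,n-1\},\underline{0})$ with $\theta\circ\psi=\mathrm{id}$, and then shows $\psi$ is a quasi--isomorphism by running the same spectral sequence with $d_0=\partial$, using homotopy invariance to get $E_1^{a,b}\cong CH^n(\FF,-b)_\QQ^{\oplus 2^a\binom{n-1}{a}}$ and observing that $E_2$ collapses to the single column $a=n-1$. Your reading of $d_1$ as the cellular chain differential of the contractible cube $[0,1]^{n-1}$ is just a more explicit phrasing of that collapse.
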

\begin{proof}
Checking that $\theta$ is a morphism of complexes is easy and left
to the reader. The $a=n-1$, $(I,\epsilon)=\left(\{1,\ldots,n-1\},\underline{0}\right)$
term of \eqref{eqn3.1-1} is a copy of $Z^{n}(\FF,n+k-1)$ in $\mathfrak{Z}_{\square}^{n}(k)$
which leads to a morphism $\psi:Z^{n}(\FF,n+\bullet-1)\to\mathfrak{Z}_{\square}^{n}(\bullet)$
with $\theta\circ\psi=\text{id}$. Moreover, it is elementary that
$\psi$ is a quasi-isomorphism: taking $d_{0}=\partial$ gives 
\[
E_{1}^{a,b}=\bigoplus_{\tiny\begin{array}{c}
(I,\epsilon)\\
|I|=a
\end{array}}CH^{n}(\square_{\FF}^{n-a-1},-b)_{\QQ}\cong CH^{n}(\FF,-b)^{\oplus2^{a}{n-1 \choose a}},
\]
hence $E_{2}^{a,b}=0$ except for $E_{2}^{n-1,b}\cong CH^{n}(\FF,-b)$,
which is exactly the image of $\psi(\ker\partial)$.%
\footnote{This is true for any field, but specifically for our $\FF=\QQ(\omega)$,
the only nonzero term is $E_{2}^{n-1,n}$.%
}
\end{proof}
In particular, we may view $\theta$ as yielding the isomorphism in
the top row of \eqref{eqn2.5-1}.

By the moving lemmas of Bloch \cite{Bl2} and Levine \cite{Le}, we
have another quasi-isomorphism
\[
\frac{\mathfrak{Z}_{\square}^{n}(\bullet)}{\imath_{*}\mathfrak{Z}_{f}^{n-1}(\bullet)}\overset{\simeq}{\longrightarrow}\mathfrak{Z}_{\square\backslash f}^{n}(\bullet),
\]
which enables us to replace any $\Y_{\square\backslash f}\in\ker(\mathbb{D})\subset\mathfrak{Z}_{\square\backslash f}^{n}(n)$
by a homologous $\Y_{\square\backslash f}'$ arising as the restriction
of some $\Y_{\square}'\in\mathfrak{Z}_{\square}^{n}(n)$ with $\mathbb{D}\Y_{\square}'=\imath_{*}(\Y_{f}'')$,
$\Y_{f}''\in\ker(\mathbb{D})\in\mathfrak{Z}_{f}^{n-1}(n-1).$ This
gives an ``explicit'' prescription for computing $\Res_{\left|(f)_{0}\right|}$
in \eqref{eqn2.5-1}.

Now we come to our central point: the cycle $\Z=\{f_{n-1},t_{1}^{N},\ldots,t_{n-1}^{N}\}$
of $\S$\ref{secMot} already belongs to $\left(Z^{n}(\square_{\FF}^{n-1},n)_{\QQ}^{\#}\subseteq\right)\mathfrak{Z}_{\square}^{n}(n)$,
without ``moving'' it by a boundary. Its restriction to $\mathfrak{Z}_{\square\backslash f}^{n}(n)$
is clearly $\mathbb{D}$-closed, and $\mathbb{D}\Z=\imath_{*}\{t_{1}^{N},\ldots,t_{n-1}^{N}\}=:\imath_{*}\T$.
By Proposition \ref{prop2.5-1}, the class of $\T$ in homology of
$\mathfrak{Z}_{f}^{n-1}(\bullet)$ is trivial, and so there exists
$\T'\in\mathfrak{Z}_{f}^{n-1}(n)$ with $\mathbb{D}\T'=-\T.$ Defining
\[
\W:=\imath_{*}\T'\;,\;\;\;\;\;\tilde{\Z}:=\Z+\W\;,
\]
we now have $\mathbb{D}\tilde{\Z}=0.$ This allows us to make a rather
precise statement about the lift in Proposition \ref{prop2.5-1}.
Write $\mathsf{p}_{i}:\square^{2n-1}\twoheadrightarrow\square^{n-i}$
for the projection $(z_{1},\ldots,z_{2n-1})\mapsto(z_{1},\ldots,z_{n-i})$.
\begin{thm}
\label{thm1}$\tilde{\Xi}$ has a representative in $Z^{n}(\FF,2n-1)_{\QQ}$
of the form 
\[
\tilde{\CZ}=\CZ+\CW=\CZ+\CW_{1}+\CW_{2}+\cdots+\CW_{n-1},
\]
where $\CZ=\theta(\Z)$ {\rm{(}}i.e., $\Z$ interpreted as an element
of $Z^{n}(\FF,2n-1)_{\QQ}${\rm{)}} and $\CW_{i}$ is supported on
$\mathsf{p}_{i}^{-1}\left|(f_{n-i})_{0}\right|$.\end{thm}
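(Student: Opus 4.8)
The plan is to take $\tilde{\CZ}:=\theta(\tilde{\Z})=\CZ+\theta(\imath_{*}\T')$ and to produce a primitive $\T'\in\mathfrak{Z}_{f}^{n-1}(n)$ of $-\T$ whose summands push forward to the asserted loci. One point is free: since $\imath_{n-1}$ has image $\left|(f_{n-1})_{0}\right|$, \emph{every} cycle of the form $\theta(\imath_{*}(-))$ already lies in $\mathsf{p}_{1}^{-1}\left|(f_{n-1})_{0}\right|$, so the ``$\CW_{1}$-part'' of the statement holds no matter how we split $\T'$, and the content is the finer decomposition.

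First I would record how the relevant loci nest and transport. Writing $\tilde{S}^{n-2}=S^{n-2}\cup\left|(f_{n-2})_{0}\right|$, Lemma \ref{lem0} identifies $\left|(f_{n-2})_{0}\right|$ with $\imath_{n-2}(\TT^{n-3})$, and one checks that $\imath_{n-1}$ carries $\left|(f_{n-2})_{0}\right|$ into the face $\{z_{n-1}=\infty\}$, with its first $n-2$ coordinates lying in $\left|(f_{n-2})_{0}\right|$. Iterating, $(\TT^{n-2},\tilde{S}^{n-2})$ carries a nested chain of sub--normal-crossing schemes $Y_{1}:=\TT^{n-2}\supset Y_{2}\supset\cdots\supset Y_{n-1}$ with $Y_{i}\cong\left|(f_{n-i})_{0}\right|\cong\TT^{n-i-1}$, and the double complex $\mathfrak{Z}_{f}^{n-1}(\bullet)$ decomposes along it; chasing the coordinate concatenations built into $\theta$ and $\imath_{*}$, a summand of $\T'$ supported (in the double-complex sense) on $Y_{i}$ has $\theta(\imath_{*}(-))\subseteq\mathsf{p}_{i}^{-1}\left|(f_{n-i})_{0}\right|$. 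So it is enough to write $\T'=\T'_{1}+\cdots+\T'_{n-1}$, $\mathbb{D}\T'=-\T$, with $\T'_{i}$ supported on $Y_{i}$.

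Such a $\T'$ I would build by induction on $n$; the cases $n\le2$ are immediate (for $n=2$, $\T=\imath_{1}^{*}\{t_{1}^{N}\}=\{\omega^{-bN}\}=\{1\}=0$, so $\tilde{\Z}=\Z$). For the step, $t_{1}\cdots t_{n-2}=\omega^{-b}(1-f_{n-2})$ and $\omega^{-bN}=1$ give $\T=\imath_{n-1}^{*}\{t_{1}^{N},\ldots,t_{n-1}^{N}\}=\{t_{1}^{N},\ldots,t_{n-2}^{N},(1-f_{n-2})^{-N}\}$ on $\TT^{n-2}$; a short chain of explicit homotopies in $Z^{n-1}(\TT^{n-2},\bullet)^{\#}$ — multilinearity of the symbol in each slot, the Steinberg relation $\{f_{n-2},1-f_{n-2}\}\equiv0$, and the vanishing of a root-of-unity entry once it is raised to the $N$-th power (this last is where the coefficient in front of $\hat{\mathscr{Z}}_{b}$ in the Introduction enters) — produces $\T'_{1}$ on $Y_{1}=\TT^{n-2}$ with $\mathbb{D}\T'_{1}+\T$ equal to the pushforward onto $Y_{2}=\left|(f_{n-2})_{0}\right|$ of a cycle that, after comparing functions, is a rational multiple of the level-$(n-1)$ residue $\imath_{n-2}^{*}\{t_{1}^{N},\ldots,t_{n-2}^{N}\}$ placed on $\left|(f_{n-2})_{0}\right|\cong\TT^{n-3}$. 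The same recursion one level down gives a primitive for that residue, whose flag-pieces — transported through $\TT^{n-3}\cong\left|(f_{n-2})_{0}\right|=Y_{2}$ — become $\T'_{2},\ldots,\T'_{n-1}$; setting $\CW_{i}:=\theta(\imath_{*}\T'_{i})$ then finishes the step by the support computation above.

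The hard part will be the explicit homotopies producing $\T'_{1}$: one must exhibit cycles in $Z^{n-1}(\TT^{n-2},\bullet)^{\#}$ realizing multilinearity and the Steinberg relation, verify that they meet all faces of $\square^{\bullet}$ and all components of $\tilde{S}^{n-2}$ properly — which is exactly where the moving lemmas of Bloch \cite{Bl2} and Levine \cite{Le} are invoked — and check that their total boundary is $-\T$ plus \emph{precisely} the claimed $\left|(f_{n-2})_{0}\right|$-term, with the right rational coefficient. A secondary nuisance is coordinate bookkeeping: $\theta$ concatenates base and fibre coordinates and $\imath_{n-1}$ inserts a coordinate determined by the rest, so after fixing an ordering convention one must still check that each support is cut out by $f_{n-i}$ in the \emph{first} $n-i$ coordinates and not in a permuted variant. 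Everything else is formal, being a matter of unwinding $\theta$, the localization sequence \eqref{eqn2.5-3}, and Proposition \ref{prop2.5-1}.
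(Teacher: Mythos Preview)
Your proposal does more work than the theorem requires, and in the process introduces both an incorrect claim and an unnecessary gap.

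First, the opening assertion that every cycle of the form $\theta(\imath_*(-))$ lies in $\mathsf{p}_1^{-1}|(f_{n-1})_0|$ is false. The map $\imath_*$ preserves simplicial degree, so a piece of $\T'$ sitting at simplicial degree $a>0$ is pushed to a codimension-$a$ face $\rho_I^{\infty}(\square^{n-1-a})$; after $\theta$ concatenates coordinates, only the first $n-1-a$ coordinates are constrained (to lie in $|(f_{n-1-a})_0|$), while coordinates $n-a,\ldots,n-1$ are free. Such cycles lie in $\mathsf{p}_{a+1}^{-1}|(f_{n-1-a})_0|$, not in $\mathsf{p}_1^{-1}|(f_{n-1})_0|$.

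Second --- and this is the main point --- the paper's proof is a one-line formal observation that you have almost stated and then obscured with the ``nested chain'' language and the inductive construction. By definition, $\mathfrak{Z}_f^{n-1}(n)$ is graded by simplicial degree, and every $(i-1)$-fold intersection of components of $\tilde{S}^{n-2}$ embeds under $\imath_{n-1}$ as a copy of $|(f_{n-i})_0|$ inside some face of $\square^{n-1}$. Hence \emph{any} $\T'$ with $\mathbb{D}\T'=-\T$ (whose existence is already given by Proposition~\ref{prop2.5-1}) decomposes as $\T'=\sum_a \T'_{(a)}$ by simplicial degree, and setting $\CW_i:=\theta(\imath_*\T'_{(i-1)})$ gives $\mathrm{supp}(\CW_i)\subset \mathsf{p}_i^{-1}|(f_{n-i})_0|$ automatically. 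No induction, no explicit homotopies, no Steinberg relations are needed here.

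Your inductive program --- building $\T'_1$ via multilinearity/Steinberg moves and pushing the remainder down a single flag $Y_1\supset Y_2\supset\cdots$ --- is in spirit what \S4 of the paper does when producing \emph{explicit formulas}, but it is irrelevant for Theorem~\ref{thm1}. It also carries a real gap: your claim that $\mathbb{D}\T'_1+\T$ lands solely on $Y_2=|(f_{n-2})_0|$ would require the cosimplicial restrictions of $\T'_1$ to the other $(n-2)$ components $\{t_j=1\}$ of $\tilde{S}^{n-2}$ to vanish, and nothing in the sketched homotopies forces this. The paper's argument sidesteps this entirely because the simplicial grading already handles all components at each level simultaneously.
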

\begin{proof}
Viewing $\left(\left|(f_{n-1})_{0}\right|,\partial\left|(f_{n-1})_{0}\right|\right)\cong\left(\TT^{n-2},\tilde{S}^{n-2}\right)$
as a simplicial subscheme $\mathfrak{X}^{\bullet}$ of $\left(\square^{n-1},\partial\square^{n-1}\right)=:X^{\bullet}$,
$\mathfrak{X}^{i-1}\subset X^{i-1}$ comprises $2^{i-1}{n-1 \choose i-1}$
copies of $\left|(f_{n-i})_{0}\right|\subset\square^{n-1}.$ We may
decompose 
\[
\W\in\bigoplus_{i=1}^{n}\bigoplus_{\tiny\begin{array}{c}
(I,\epsilon)\\
|I|=i-1
\end{array}}\imath_{*}Z^{n-1}\left(\left|(f_{n-i})_{0}\right|,n+i-1\right)_{\QQ}^{\#}\subset\bigoplus_{i=1}^{n-1}E_{0}^{i-1,-n-i+1}
\]
into its constituent pieces $\W_{i}\in E_{0}^{i-1,-n-i+1}$, and define
$\CW_{i}:=\theta(\W_{i})$ and $\CW=\theta(\W)$. Clearly $\text{supp}(\CW_{i})\subset\mathsf{p}_{i}^{-1}\left|(f_{n-i})_{0}\right|,$
and $\tilde{\CZ}:=\theta(\tilde{\Z})$ is $\partial$-closed, giving
the desired representation.\end{proof}
\begin{rem}
\label{rem3.1-1}In fact, $\sigma(\CZ)$ belongs to $Z_{\RR}^{n}(\text{Spec}(\CC),2n-1)_{\QQ}$
for any $\sigma\in Hom(\FF,\CC)$: the intersections $T_{z_{1}}\cap\cdots\cap T_{z_{k}}\cap(\rho_{I}^{\epsilon})^{*}\sigma(\CZ)$
are empty excepting $T_{z_{1}}\cap\cdots\cap T_{z_{k}}\cap\sigma(\CZ)$
for $k\leq n-1$ and $T_{z_{1}}\cap\cdots\cap T_{z_{k}}\cap(\rho_{n}^{0})^{*}\sigma(\CZ)$
for $k\leq n-2$, which are both of the expected real codimension.
A trivial modification of the above argument then shows that the $\CW_{i}$
may be chosen so that the $\sigma(\CW_{i})$ (and hence $\sigma(\tilde{\CZ})$)
are in $Z_{\RR}^{n}(\text{Spec}(\CC),2n-1)_{\QQ}$ as well. We shall
henceforth assume that this has been done.
\end{rem}

\subsection{Computing the KLM map}

We begin by simplifying the formula \eqref{eqn2.4-1} for the regulator
map.
\begin{lem}
\label{lem7}Let $K\subset\CC$ and suppose $Z\in\ker(\partial)\subset Z_{\RR}^{n}(\mathrm{Spec}(K),2n-1)_{\QQ}$
satisfies \begin{equation}\label{eqn3.2-1}T_{z_1}\cap\cdots\cap T_{z_n}\cap Z^{\text{an}}_{\CC} = \emptyset .
\end{equation}Then
\[
c_{\D,\QQ}(Z)=\int_{Z_{\CC}^{\text{an}}\cap T_{z_{1}}\cap\cdots\cap T_{z_{n-1}}}\log(z_{n})\tfrac{dz_{n+1}}{z_{n+1}}\wedge\cdots\wedge\tfrac{dz_{2n-1}}{z_{2n-1}}
\]
in $\CC/\QQ(n).$\end{lem}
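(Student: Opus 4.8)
The plan is to substitute $Z$ into the regulator formula \eqref{eqn2.4-1}--\eqref{eqn2.4-2} and to show that, of the $2n-1$ summands $R^{(k)}_{2n-1}$ making up $R_{2n-1}$, only the one with $k=n$ contributes to $\int_{Z^{\text{an}}_{\CC}}R_{2n-1}$. Taking $p=n$ in \eqref{eqn2.4-1},
\[
c_{\D,\QQ}(Z)=\tfrac{1}{(2\pi\ay)^{n-1}}\sum_{k=1}^{2n-1}(2\pi\ay)^{k-1}\int_{Z^{\text{an}}_{\CC}}R^{(k)}_{2n-1}\quad\text{in }\CC/\QQ(n),
\]
where $R^{(k)}_{2n-1}=\log(z_k)\,\tfrac{dz_{k+1}}{z_{k+1}}\wedge\cdots\wedge\tfrac{dz_{2n-1}}{z_{2n-1}}\cdot\delta_{T_{z_1}\cap\cdots\cap T_{z_{k-1}}}$. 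The single numerical input is that $Z$ has codimension $n$ in $\mathrm{Spec}(K)\times\square^{2n-1}$, so $\dim_\CC Z^{\text{an}}_{\CC}=n-1$; this, together with the good-position hypothesis $Z\in Z^n_{\RR}$ (which is what makes all the restrictions below legitimate, cf.\ Remark \ref{rem3.1-1}) and with the emptiness \eqref{eqn3.2-1}, will kill every term with $k\neq n$.

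For $k\geq n+1$ I would invoke \eqref{eqn3.2-1} directly. By construction $R^{(k)}_{2n-1}$ is supported, inside $(\PP^1)^{\times(2n-1)}$, on $T_{z_1}\cap\cdots\cap T_{z_{k-1}}$, and since $k-1\geq n$ this is contained in $T_{z_1}\cap\cdots\cap T_{z_n}$. Hence the integration chain $Z^{\text{an}}_{\CC}\cap T_{z_1}\cap\cdots\cap T_{z_{k-1}}$ to which $R^{(k)}_{2n-1}$ restricts is contained in $Z^{\text{an}}_{\CC}\cap T_{z_1}\cap\cdots\cap T_{z_n}=\emptyset$, so $\int_{Z^{\text{an}}_{\CC}}R^{(k)}_{2n-1}=0$.

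For $1\leq k\leq n-1$ I would argue by a Hodge-degree count. Apart from the factor $\log(z_k)\cdot\delta_{T_{z_1}\cap\cdots\cap T_{z_{k-1}}}$, the current $R^{(k)}_{2n-1}$ carries the form $\tfrac{dz_{k+1}}{z_{k+1}}\wedge\cdots\wedge\tfrac{dz_{2n-1}}{z_{2n-1}}$, which is holomorphic on $\GG_m^{\times(2n-1)}$ and has degree $2n-1-k\geq n>n-1=\dim_\CC Z^{\text{an}}_{\CC}$. Because $Z$ meets all faces properly it lies in no coordinate divisor $\{z_i=0\}$ or $\{z_i=\infty\}$, so this form restricts to a holomorphic form on the dense open $Z^{\text{an}}_{\CC}\cap\GG_m^{\times(2n-1)}$ of degree exceeding $\dim_\CC Z^{\text{an}}_{\CC}$, hence identically zero; the excluded coordinate and singular loci are negligible for the good-position chain $Z^{\text{an}}_{\CC}\cap T_{z_1}\cap\cdots\cap T_{z_{k-1}}$. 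Thus $R^{(k)}_{2n-1}$ restricts to zero on $Z^{\text{an}}_{\CC}$, and again $\int_{Z^{\text{an}}_{\CC}}R^{(k)}_{2n-1}=0$.

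Only the term $k=n$ survives; there the prefactor $(2\pi\ay)^{k-1}=(2\pi\ay)^{n-1}$ cancels the factor $\tfrac{1}{(2\pi\ay)^{n-1}}$, leaving
\[
c_{\D,\QQ}(Z)=\int_{Z^{\text{an}}_{\CC}}R^{(n)}_{2n-1}=\int_{Z^{\text{an}}_{\CC}\cap T_{z_1}\cap\cdots\cap T_{z_{n-1}}}\log(z_n)\,\tfrac{dz_{n+1}}{z_{n+1}}\wedge\cdots\wedge\tfrac{dz_{2n-1}}{z_{2n-1}}
\]
in $\CC/\QQ(n)$, which is the assertion (the branch ambiguity in $\log(z_n)$ on the right is absorbed by the quotient by $\QQ(n)$). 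The part I expect to require the most care is bookkeeping rather than ideas: the claims ``$R^{(k)}_{2n-1}$ may be restricted to $Z^{\text{an}}_{\CC}$'' and ``its restriction is computed by pulling back the form and integrating over $Z^{\text{an}}_{\CC}\cap T_{z_1}\cap\cdots\cap T_{z_{k-1}}$'' are exactly what membership in $Z^n_{\RR}(\mathrm{Spec}(K),2n-1)_{\QQ}$ is meant to guarantee, so I would cite \cite[$\S$8]{KL} for those intersection-theoretic facts (and rely on \eqref{eqn2.4-1}, which already builds in $\partial Z=0$) rather than re-prove them.
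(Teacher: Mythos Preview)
Your argument is correct and follows essentially the same route as the paper: for $k\leq n-1$ the terms vanish ``by type'' (the holomorphic form has degree $>\dim_{\CC}Z_{\CC}^{\text{an}}$), for $k\geq n+1$ the integration chain is empty by \eqref{eqn3.2-1}, and only the $k=n$ term survives. Your write-up simply spells out in more detail what the paper compresses into two sentences.
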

\begin{proof}
We have $c_{\D,\QQ}(Z)=$
\[
\sum_{k=1}^{n-1}(2\pi\ay)^{k-n}\int_{Z_{\CC}^{\text{an}}}R_{2n-1}^{(k)}+\int_{Z_{\CC}^{\text{an}}}R_{2n-1}^{(n)}+\sum_{k=1}^{n-1}(2\pi\ay)^{k}\int_{Z_{\CC}^{\text{an}}}R_{2n-1}^{(n+k)}.
\]
The terms $\int_{Z_{\CC}^{\text{an}}}R_{2n-1}^{(k)}$ are zero by
type, since $\dim_{\CC}Z_{\CC}=n-1$, and the $\int_{Z_{\CC}^{\text{an}}}R_{2n-1}^{(n+k)}$
are integrals over $Z_{\CC}^{\text{an}}\cap T_{z_{1}}\cap\cdots\cap T_{z_{n+k-1}}=\emptyset$.
So only the middle term remains.\end{proof}
\begin{lem}
\label{lem8}For any $\sigma\in Hom(\FF,\CC)$, $T_{z_{1}}\cap\cdots\cap T_{z_{n}}\cap\sigma(\tilde{\CZ})=\emptyset$.\end{lem}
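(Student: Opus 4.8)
The plan is to isolate one elementary inequality and then apply it separately to the summand $\sigma(\CZ)$ and to each summand $\sigma(\CW_{i})$ of $\sigma(\tilde{\CZ})=\sigma(\CZ)+\sum_{i=1}^{n-1}\sigma(\CW_{i})$ furnished by Theorem \ref{thm1}. The inequality: for $1\le m\le n-1$ set $F_{m}:=1-\zeta^{b}t_{1}\cdots t_{m}=1-\zeta^{b}\prod_{j=1}^{m}\tfrac{z_{j}}{z_{j}-1}$, a function of $(z_{1},\dots,z_{m})$ whose only possible poles lie along $\{z_{j}=1\}$, so that it is in fact a morphism $\square^{m}\to\PP^{1}$. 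I claim that at any point with $z_{1},\dots,z_{m}\in\RR_{-}$ it has strictly positive real part: indeed $z<0$ forces $t=\tfrac{z}{z-1}\in(0,1)$ (both $z$ and $z-1$ are negative, and $t<1\iff z>z-1$), so $0<t_{1}\cdots t_{m}<1$, and since $|\zeta^{b}|=1$,
\[
\mathrm{Re}\,F_{m}\;=\;1-\mathrm{Re}\bigl(\zeta^{b}t_{1}\cdots t_{m}\bigr)\;\ge\;1-t_{1}\cdots t_{m}\;>\;0 .
\]
In particular $F_{m}$ is then neither $0$ nor a non-positive real. (For $n=1$ the analogue is that $1-\zeta^{b}$ has positive real part, which holds since $\zeta^{b}$ is a primitive $N^{\mathrm{th}}$ root of unity, hence $\ne1$.)

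Now I apply this. On $\sigma(\CZ)=\theta(\sigma(\Z))$ the $n^{\mathrm{th}}$ coordinate of $\square^{2n-1}$ is, by construction, $F_{n-1}(z_{1},\dots,z_{n-1})$; because $F_{n-1}$ is a morphism on $\square^{n-1}$, this identity persists on the Zariski closure of $\sigma(\CZ)$ at every point whose first $n-1$ coordinates avoid $z_{j}=1$. Hence a point of $\sigma(\CZ)\cap T_{z_{1}}\cap\cdots\cap T_{z_{n}}$ would have $z_{1},\dots,z_{n-1}\in\RR_{-}$ and its $n^{\mathrm{th}}$ coordinate equal to $F_{n-1}$ with $\mathrm{Re}\,F_{n-1}>0$, contradicting membership in $T_{z_{n}}=z_{n}^{-1}(\RR_{-})$. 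For $\sigma(\CW_{i})$ with $1\le i\le n-1$: by Theorem \ref{thm1} it is supported on $\mathsf{p}_{i}^{-1}|(f_{n-i})_{0}|$, so $\mathsf{p}_{i}^{*}F_{n-i}$ vanishes identically on it, hence on its Zariski closure wherever the first $n-i$ coordinates avoid $z_{j}=1$. A point with first $n$ coordinates in $\RR_{-}$ would then have $z_{1},\dots,z_{n-i}\in\RR_{-}$ (note $n-i\le n-1$) and $F_{n-i}(z_{1},\dots,z_{n-i})=0$, contradicting $\mathrm{Re}\,F_{n-i}>0$; so $\sigma(\CW_{i})$ already misses $T_{z_{1}}\cap\cdots\cap T_{z_{n-i}}$, a fortiori $T_{z_{1}}\cap\cdots\cap T_{z_{n}}$.

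Since $\mathrm{supp}\,\sigma(\tilde{\CZ})\subseteq\mathrm{supp}\,\sigma(\CZ)\cup\bigcup_{i}\mathrm{supp}\,\sigma(\CW_{i})$, this proves the lemma. The only point requiring any care --- the nearest thing to an obstacle --- is the passage to Zariski closures in $(\PP^{1})^{\times(2n-1)}$; it is harmless precisely because each $F_{m}$ extends to a genuine morphism on $\square^{m}=(\PP^{1}\setminus\{1\})^{m}$, so the relevant coordinate- and vanishing-identities hold at every point whose first $m$ coordinates lie in $(\RR_{-})^{m}\subset\square^{m}$. All the rest is the single displayed inequality.
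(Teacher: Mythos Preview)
Your proof is correct and follows essentially the same route as the paper's: split $\sigma(\tilde{\CZ})$ into $\sigma(\CZ)$ and the $\sigma(\CW_i)$, use that $z_n=F_{n-1}$ on $\CZ$ and that $F_{n-i}\equiv 0$ on the support of $\CW_i$ (Theorem~\ref{thm1}), and then observe that $z_1,\dots,z_m\in\RR_-$ forces $t_1\cdots t_m\in(0,1)$, making both constraints impossible. Your packaging of the estimate as the single inequality $\mathrm{Re}\,F_m>0$ is a mild cosmetic unification of the paper's two separate observations (that $1-\zeta^b[0,1]$ misses $\RR_-$, and that $\bar\zeta^{\,b}$ is not a product of the $t_j$), but the substance is identical.
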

\begin{proof}
From Theorem \ref{thm1}, $\sigma(\CW_{i})$ is supported over $\mathsf{p}_{i}^{-1}\left(\left|(f_{n-i})_{0}\right|\right)$;
that is, on $\sigma(\CW_{i})$ we have $z_{1}\cdots z_{n-i}=\bar{\zeta}^{b}$,
and so $T_{z_{1}}\cap\cdots\cap T_{z_{n-i}}\cap\sigma(\CW_{i})=\emptyset$
since $\bar{\zeta}^{b}\notin(-1)^{n-i}\RR_{+}$. On $\sigma(\CZ)$,
$z_{n}=f_{n-1}(z_{1},\ldots,z_{n-1})=1-\zeta^{b}t_{1}\cdots t_{n-1}$
(where $t_{i}=\tfrac{z_{i}}{z_{i}-1}$); and on $T_{z_{i}}$, $t_{i}\in[0,1]$.
It follows that on $T_{z_{1}}\cap\cdots\cap T_{z_{n}}\cap\sigma(\CZ)$,
$z_{n}$ belongs to $\RR_{-}\cap\left(1-\zeta^{b}[0,1]\right),$ which
is empty.
\end{proof}
We may now compute the regulator on the cycle of Theorem \ref{thm1},
independently of the choice of the $\CW_{i}$:
\begin{thm}
\label{thm2} $c_{\D,\QQ}(\sigma(\Xi))=Li_{n}(\zeta^{b})\in\CC/\QQ(n).$\end{thm}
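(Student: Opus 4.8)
The strategy is to use Lemmas \ref{lem7} and \ref{lem8} to collapse $c_{\D,\QQ}(\sigma(\Xi))$ to an elementary integral over a real $(n-1)$-cube, and then to recover the polylogarithm from the expansion $\log(1-w)=-\sum_{k\ge1}w^{k}/k$. First, by $\QQ$-linearity of $c_{\D,\QQ}$ and the normalisation $\Xi=\tfrac{(-1)^{n}}{N^{n-1}}\tilde{\Xi}$, it suffices to show
\[
c_{\D,\QQ}\bigl(\sigma(\tilde{\CZ})\bigr)=(-1)^{n}N^{n-1}\,Li_{n}(\zeta^{b})\qquad\text{in }\;\CC/\QQ(n),
\]
where $\tilde{\CZ}=\CZ+\CW_{1}+\cdots+\CW_{n-1}$ is the representative of Theorem \ref{thm1}. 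By Remark \ref{rem3.1-1} this cycle lies in $Z^{n}_{\RR}(\text{Spec}(\CC),2n-1)_{\QQ}$, it is $\partial$-closed, and Lemma \ref{lem8} verifies the hypothesis \eqref{eqn3.2-1}; hence Lemma \ref{lem7} applies and gives
\[
c_{\D,\QQ}\bigl(\sigma(\tilde{\CZ})\bigr)=\int_{\sigma(\tilde{\CZ})^{\text{an}}_{\CC}\cap T_{z_{1}}\cap\cdots\cap T_{z_{n-1}}}\log(z_{n})\,\tfrac{dz_{n+1}}{z_{n+1}}\wedge\cdots\wedge\tfrac{dz_{2n-1}}{z_{2n-1}}\,.
\]

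The second step is that only $\CZ$ contributes. Indeed, by Theorem \ref{thm1} the cycle $\sigma(\CW_{i})$ lies over $\left|(f_{n-i})_{0}\right|$, so on it $t_{1}\cdots t_{n-i}=\bar{\zeta}^{b}$; but on $T_{z_{1}}\cap\cdots\cap T_{z_{n-i}}$ each $t_{j}=\tfrac{z_{j}}{z_{j}-1}$ lies in $[0,1]$, so $t_{1}\cdots t_{n-i}\in[0,1]$, which cannot equal the root of unity $\bar{\zeta}^{b}\neq1$. Thus $T_{z_{1}}\cap\cdots\cap T_{z_{n-i}}\cap\sigma(\CW_{i})=\emptyset$, and since $n-i\le n-1$ the larger intersection $T_{z_{1}}\cap\cdots\cap T_{z_{n-1}}\cap\sigma(\CW_{i})$ is empty as well. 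It then remains to integrate over $\sigma(\CZ)^{\text{an}}_{\CC}\cap T_{z_{1}}\cap\cdots\cap T_{z_{n-1}}$. I would parametrise $\sigma(\CZ)$ by $(z_{1},\dots,z_{n-1})\in\square^{n-1}\setminus\left|(f)_{0}\right|$ — on which $z_{n}=1-\zeta^{b}t_{1}\cdots t_{n-1}$ and $z_{n+j}=t_{j}^{N}$, $t_{j}=\tfrac{z_{j}}{z_{j}-1}$ — so that the locus $\{z_{1},\dots,z_{n-1}\in\RR_{-}\}$ is swept out bijectively by $(t_{1},\dots,t_{n-1})\in(0,1)^{n-1}$ (and avoids $\left|(f)_{0}\right|$, since $\left|\zeta^{b}t_{1}\cdots t_{n-1}\right|<1$ there). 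Using $\tfrac{dz_{n+j}}{z_{n+j}}=N\,\tfrac{dt_{j}}{t_{j}}$ and the orientation of the chain $T_{z_{1}}\cap\cdots\cap T_{z_{n-1}}$ dictated by the current $R_{2n-1}$ of \eqref{eqn2.4-2} (which is the source of the sign $(-1)^{n-1}$ below), this yields
\[
c_{\D,\QQ}\bigl(\sigma(\tilde{\CZ})\bigr)=(-1)^{n-1}N^{n-1}\int_{(0,1)^{n-1}}\log\bigl(1-\zeta^{b}t_{1}\cdots t_{n-1}\bigr)\,\tfrac{dt_{1}}{t_{1}}\wedge\cdots\wedge\tfrac{dt_{n-1}}{t_{n-1}}\,.
\]

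Finally, on $(0,1)^{n-1}$ the quantity $\zeta^{b}t_{1}\cdots t_{n-1}$ runs over the compact segment from $0$ to $\zeta^{b}$, which (as $\zeta^{b}\neq1$) is bounded away from $1$, so $\log(1-\zeta^{b}t_{1}\cdots t_{n-1})=-\sum_{k\ge1}\tfrac{\zeta^{bk}}{k}(t_{1}\cdots t_{n-1})^{k}$ converges uniformly there and integrates term by term:
\[
\int_{(0,1)^{n-1}}\log\bigl(1-\zeta^{b}t_{1}\cdots t_{n-1}\bigr)\,\tfrac{dt_{1}}{t_{1}}\wedge\cdots\wedge\tfrac{dt_{n-1}}{t_{n-1}}=-\sum_{k\ge1}\frac{\zeta^{bk}}{k}\prod_{j=1}^{n-1}\int_{0}^{1}t_{j}^{k-1}\,dt_{j}=-\sum_{k\ge1}\frac{\zeta^{bk}}{k^{n}}=-Li_{n}(\zeta^{b}).
\]
Hence $c_{\D,\QQ}(\sigma(\tilde{\CZ}))=(-1)^{n}N^{n-1}Li_{n}(\zeta^{b})$, and therefore $c_{\D,\QQ}(\sigma(\Xi))=Li_{n}(\zeta^{b})$ in $\CC/\QQ(n)$. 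The one point demanding real care is the orientation bookkeeping in the second step — that the chain $\sigma(\CZ)^{\text{an}}_{\CC}\cap T_{z_{1}}\cap\cdots\cap T_{z_{n-1}}$, oriented as \eqref{eqn2.4-2} prescribes, is exactly $(-1)^{n-1}$ times the positively oriented cube $(0,1)^{n-1}$; the rest is formal, or a one-line estimate, with Lemma \ref{lem7} invoked precisely so that the terms $R_{2n-1}^{(k)}$ with $k\ne n$ never enter.
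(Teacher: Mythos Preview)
Your proof is correct and follows essentially the same route as the paper's: invoke Lemmas~\ref{lem7} and \ref{lem8} to discard the $\CW_i$ and reduce $c_{\D,\QQ}(\sigma(\tilde{\CZ}))$ to the single integral $\int_{(0,1)^{n-1}}\log(1-\zeta^{b}t_{1}\cdots t_{n-1})\,\tfrac{dt_{1}}{t_{1}}\wedge\cdots\wedge\tfrac{dt_{n-1}}{t_{n-1}}$, then evaluate. The only difference is cosmetic: you expand $\log(1-w)$ as a power series and integrate termwise, whereas the paper substitutes $u_{j}=\zeta^{b}t_{j}\cdots t_{n-1}$ to recognise the standard iterated-integral representation of $Li_{n}$; both give $(-1)^{n}N^{n-1}Li_{n}(\zeta^{b})$ immediately.
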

\begin{proof}
By Lemmas \ref{lem7} and \ref{lem8}, we obtain $c_{\D,\QQ}(\sigma(\tilde{\CZ}))=$\begin{multline*}
\int_{\sigma(\CZ)^{\text{an}}_{\CC}\cap T_{z_1}\cap\cdots\cap T_{z_{n-1}}} \log(z_n)\tfrac{dz_{n+1}}{z_{n+1}}\wedge\cdots\wedge \tfrac{dz_{2n-1}}{z_{2n-1}} \\ 
+\; \sum_{i=1}^{n-1} \int_{\sigma(\CW_i)^{\text{an}}_{\CC}\cap T_{z_1}\cap\cdots\cap T_{z_{n-1}}} \log(z_n) \tfrac{dz_{n+1}}{z_{n+1}} \wedge\cdots\wedge \tfrac{dz_{2n-1}}{z_{2n-1}}\, ,
\end{multline*}in which (by the proof of Lemma \ref{lem8}) $\sigma(\CW_{i})_{\CC}^{\text{an}}\cap T_{z_{1}}\cap\cdots\cap T_{z_{n-1}}=\emptyset$
($\forall i$). The remaining (first) term becomes
\[
\int_{\uz\in\RR_{-}^{\times(n-1)}}\log(f_{n-1}(\uz))\tfrac{dt_{1}^{N}}{t_{1}^{N}}\wedge\cdots\wedge\tfrac{dt_{n-1}^{N}}{t_{n-1}^{N}}=
\]
\[
(-N)^{n-1}\int_{\underline{t}\in[0,1]^{\times(n-1)}}\log\left(1-\zeta^{b}t_{1}\cdots t_{n-1}\right)\tfrac{dt_{1}}{t_{1}}\wedge\cdots\wedge\tfrac{dt_{n-1}}{t_{n-1}}=
\]
\[
(-N)^{n-1}\int_{0}^{\zeta^{b}}\int_{0}^{u_{n-1}}\cdots\int_{0}^{u_{2}}\log(1-u_{1})\tfrac{du_{1}}{u_{1}}\wedge\cdots\wedge\tfrac{du_{n-1}}{u_{n-1}}=
\]
\[
(-1)^{n}N^{n-1}Li_{n}(\zeta^{b}),
\]
where $u_{n-1}=\zeta^{b}t_{n-1}$, $u_{n-2}=\zeta^{b}t_{n-2}t_{n-1}$,
$\ldots$, $u_{1}=\zeta^{b}t_{1}\cdots t_{n-1}.$
\end{proof}
To write the image of our cycles under the Borel regulator, we refine
notation by writing $\sigma_{a}$ (for $\sigma:\omega\mapsto e^{\frac{2\pi\ay a}{N}}$),
$f_{n-1,b}=1-\omega^{b}t_{1}\cdots t_{n-1}$, $\Xi_{b}$, $\tilde{\CZ}_{b}$,
$\CZ_{b}$, etc. So Theorem \ref{thm2} reads $c_{\D,\QQ}(\sigma_{a}(\Xi_{b}))=Li_{n}\left(e^{\frac{2\pi\ay ab}{N}}\right),$
and one has the
\begin{cor}
\label{cor1}Let $N\geq3$ and set 
\[
A:=\left\{ a\in\mathbb{N}\,\left|\,(a,N)=1\text{ and }1\leq a\leq\left\lfloor \tfrac{N}{2}\right\rfloor \right.\right\} ;
\]
then for any $b\in A$, 
\[
\tilde{c}_{\D,\RR}^{+}(\Xi_{b})=\left(\pi_{n}(Li_{n}(e^{\frac{2\pi\ay ab}{N}})\right)_{a\in A}\in\RR(n-1)^{\oplus\frac{1}{2}\phi(N)},
\]
where $\pi_{n}:\,\CC\to\RR(n-1)$ is $\mathbf{i}\mathrm{Im}$ {\rm{[}}resp.
$\mathrm{Re}${\rm{]}} for $n$ even {\rm{[}}resp. odd{\rm{]}}.
If $N=2$, then $\tilde{c}_{\D,\RR}^{+}=0$ for $n$ even and $\tilde{c}_{\D,\RR}^{+}(\Xi_{1})=\zeta(n)\in\RR(n-1)$
for $n$ odd.
\end{cor}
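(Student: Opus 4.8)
The plan is to obtain Corollary \ref{cor1} from Theorem \ref{thm2} by bookkeeping, using the description of $H^{1}_{\D}(\widetilde{X_{\CC}^{\text{an}}},\RR(n))^{+}$ recorded just before Lemma \ref{lem4}. First I would unwind the definitions: $\tilde{c}_{\D,\RR}(\Xi_{b})=\bigl(c_{\D,\RR}(\sigma(\Xi_{b}))\bigr)_{\sigma}$, with $\sigma$ running over $Hom(\FF,\CC)=\{\sigma_{a}\}_{(a,N)=1}$, lives in $\bigoplus_{\sigma}H^{1}_{\D}((\mathrm{Spec}\,{}^{\sigma}\FF)_{\CC}^{\text{an}},\RR(n))=\bigoplus_{\sigma}\CC/\RR(n)$, and $\tilde{c}_{\D,\RR}^{+}(\Xi_{b})$ is the image of this tuple in the de Rham conjugation invariants. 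So the statement reduces to (i) computing each $c_{\D,\RR}(\sigma_{a}(\Xi_{b}))$, and (ii) checking that projection onto the components indexed by $a\in A$ identifies the invariant subspace with $\RR(n-1)^{\oplus d_{n}}$.

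For (i): the class map $c_{\D,\RR}$ is the composite of $c_{\D,\QQ}$ with the natural map $Ext^{1}_{\MHS}(\QQ(0),\QQ(n))=\CC/\QQ(n)\to Ext^{1}_{\MHS}(\RR(0),\RR(n))=\CC/\RR(n)\cong\RR(n-1)$, and on a representative in $\CC$ this composite is exactly $\pi_{n}$ (namely $\mathrm{Re}$ when $n$ is odd, since then $\RR(n)=(2\pi\ay)^{n}\RR=\ay\RR$, and $\ay\,\mathrm{Im}$ when $n$ is even, since then $\RR(n)=\RR$). Hence Theorem \ref{thm2} yields $c_{\D,\RR}(\sigma_{a}(\Xi_{b}))=\pi_{n}\bigl(Li_{n}(e^{\frac{2\pi\ay ab}{N}})\bigr)$ for every $a$ coprime to $N$.

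For (ii) with $N\geq 3$: here $\FF$ is totally imaginary, so $r_{1}=0$, $r_{2}=\tfrac{1}{2}\phi(N)$, $d_{n}=\tfrac{1}{2}\phi(N)$; de Rham conjugation acts on $\bigoplus_{\sigma}\CC/\RR(n)$ by $(z_{\sigma})_{\sigma}\mapsto(\overline{z_{\bar{\sigma}}})_{\sigma}$, permuting the summands by $\sigma_{a}\leftrightarrow\sigma_{-a}=\sigma_{N-a}$ with no fixed points ($a=N-a$ would force $N\mid 2a$, impossible for $(a,N)=1$ and $N\geq3$). Each pair $\{a,N-a\}$ meets $A$ in exactly one element, so $|A|=d_{n}$, and on the invariant subspace the remaining coordinates $z_{\sigma_{N-a}}$ are determined (up to the sign $(-1)^{n-1}$ coming from conjugation on $\RR(n-1)$) by the $z_{\sigma_{a}}$, $a\in A$; thus projection to the $A$-coordinates is the asserted isomorphism onto $\RR(n-1)^{\oplus d_{n}}$. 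Combining with (i), the $a$-th coordinate of $\tilde{c}_{\D,\RR}^{+}(\Xi_{b})$ is $c_{\D,\RR}(\sigma_{a}(\Xi_{b}))=\pi_{n}(Li_{n}(e^{\frac{2\pi\ay ab}{N}}))$, as claimed. For $N=2$ one has $\FF=\QQ$, $r_{1}=1$, $r_{2}=0$: for $n$ even $d_{n}=0$ so the target vanishes and $\tilde{c}_{\D,\RR}^{+}=0$; for $n$ odd there is a single (real) embedding, and by (i) $\tilde{c}_{\D,\RR}^{+}(\Xi_{1})=\mathrm{Re}\,Li_{n}(-1)=Li_{n}(-1)=-(1-2^{1-n})\zeta(n)$, i.e.\ a nonzero rational multiple of $\zeta(n)$ in $\RR(n-1)=\RR$ (rescaling the chosen generator of this rank-one space gives the stated normalization).

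The whole argument is essentially formal once Theorem \ref{thm2} is available; the only slightly delicate point is step (ii) --- the precise form of the isomorphism $H^{1}_{\D}(\widetilde{X^{\text{an}}_{\CC}},\RR(n))^{+}\cong\RR(n-1)^{\oplus d_{n}}$, including the parity-dependent sign in the de Rham conjugation action --- but this is the standard structure underlying the Borel regulator and is exactly what is set up just before Lemma \ref{lem4}.
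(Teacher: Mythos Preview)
Your proposal is correct and follows exactly the approach the paper intends: the paper gives no separate proof of Corollary \ref{cor1} at all, presenting it as an immediate consequence of Theorem \ref{thm2} together with the description of $\tilde{c}_{\D,\RR}^{+}$ set up before Lemma \ref{lem4}, and you have simply written out that bookkeeping in full. Your observation in the $N=2$, $n$ odd case is also apt: Theorem \ref{thm2} literally gives $Li_{n}(-1)=-(1-2^{1-n})\zeta(n)$ rather than $\zeta(n)$, so the paper's ``$\zeta(n)$'' should be read up to a nonzero rational factor.
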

As an immediate consequence, we get a (rational) basis for the higher
Chow cycles on a point over any abelian extension of $\QQ$:
\begin{cor}
\label{cor2}The $\{\Xi_{b}\}_{b\in A}$ span $CH^{n}(\FF,2n-1)_{\QQ}$.
Moreover, for any subfield $\mathbb{E}\subset\FF$, with $\Gamma=\text{Gal}(\FF/\mathbb{E})$,
there exists a subset $B\subset A$ {\rm{(}}with $|B|=d_{n}(\mathbb{E})${\rm{)}}
such that the $\left\{ \sum_{\gamma\in\Gamma}{}^{\gamma}\Xi_{b}\right\} _{b\in B}$
span $CH^{n}(\mathbb{E},2n-1)_{\QQ}.$\end{cor}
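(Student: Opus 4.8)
The plan is to deduce Corollary \ref{cor2} from Corollary \ref{cor1} by a dimension count together with the functoriality of the regulator under pullback along $\mathrm{Spec}(\FF)\to\mathrm{Spec}(\mathbb{E})$. First I would recall that, by Borel's theorem (as invoked in the proof of Lemma \ref{lem4}), $\dim_{\QQ}CH^{n}(\FF,2n-1)_{\QQ}=d_{n}(\FF)=\tfrac12\phi(N)=|A|$ for $N\geq3$ (and $=d_n(\QQ)$ for $N=2$), so it suffices to show the $\{\Xi_b\}_{b\in A}$ are linearly independent in $CH^n(\FF,2n-1)_{\QQ}$. Since $\tilde{c}^+_{\D,\RR}\otimes\RR$ is an isomorphism by Lemma \ref{lem4}, independence can be checked after applying $\tilde{c}^+_{\D,\RR}$; by Corollary \ref{cor1} this sends $\Xi_b$ to the vector $\bigl(\pi_n(Li_n(e^{2\pi\ay ab/N}))\bigr)_{a\in A}\in\RR(n-1)^{\oplus|A|}$. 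So the claim reduces to the non-vanishing of the determinant of the $|A|\times|A|$ matrix $M_{a,b}=\pi_n\bigl(Li_n(e^{2\pi\ay ab/N})\bigr)$.

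The key step is therefore this determinant computation. Writing $\zeta_N=e^{2\pi\ay/N}$, one has the classical identity expressing $Li_n(\zeta_N^c)$ in terms of the Hurwitz zeta function / generalized Bernoulli-type values, and in particular $\pi_n(Li_n(\zeta_N^c))$ is, up to a nonzero scalar and lower-order terms, a value $L(n,\chi)$-type expression; more to the point, the matrix $M$ is (a real-part or imaginary-part projection of) a submatrix of the character table of $(\ZZ/N\ZZ)^*$ paired against the Dirichlet $L$-values $L(n,\chi)$ for odd [resp. even] $\chi$. Concretely, summing $\chi(b)M_{a,b}$ over $b\in A$ and extending over all of $(\ZZ/N\ZZ)^*$ using the parity of $\pi_n$ diagonalizes $M$: the eigenvalues are (nonzero multiples of) $L(n,\chi)$ over the $\tfrac12\phi(N)$ characters $\chi$ of the correct parity, and each $L(n,\chi)\neq0$ since $n\geq1$ (for $n\geq2$ this is the Euler product; for $n=1$ one uses $L(1,\chi)\neq0$, the classical ingredient in Dirichlet's theorem). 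Hence $\det M\neq0$ and the $\{\Xi_b\}_{b\in A}$ are independent, proving the first assertion. The $N=2$ case is handled directly from the second sentence of Corollary \ref{cor1}.

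For the second assertion, consider the pullback $\pi^*:CH^n(\mathbb{E},2n-1)_{\QQ}\to CH^n(\FF,2n-1)_{\QQ}$ along $\mathrm{Spec}(\FF)\to\mathrm{Spec}(\mathbb{E})$ and the norm/transfer $\pi_*$ in the other direction; these satisfy $\pi_*\pi^*=[\FF:\mathbb{E}]\cdot\mathrm{id}$, and on the regulator side $\pi_*$ corresponds to $\sum_{\gamma\in\Gamma}{}^\gamma(-)$. I would identify $CH^n(\mathbb{E},2n-1)_{\QQ}$ with the $\Gamma$-invariants of $CH^n(\FF,2n-1)_{\QQ}$ (using $K_{2n-1}(\mathcal{O}_{\mathbb{E}})_{\QQ}=\bigl(K_{2n-1}(\mathcal{O}_{\FF})_{\QQ}\bigr)^{\Gamma}$), which has dimension $d_n(\mathbb{E})$. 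Averaging the spanning set $\{\Xi_b\}_{b\in A}$ over $\Gamma$ gives the elements $\sum_{\gamma\in\Gamma}{}^\gamma\Xi_b=\pi_*\Xi_b$ (up to normalization), and since the $\Xi_b$ span $CH^n(\FF,2n-1)_{\QQ}$ their $\Gamma$-averages span the $\Gamma$-invariants, hence span $CH^n(\mathbb{E},2n-1)_{\QQ}$. Choosing $B\subset A$ to be any subset indexing a basis among these averaged classes gives $|B|=d_n(\mathbb{E})$.

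The main obstacle is the determinant non-vanishing in the first paragraph: making precise the passage from $\pi_n(Li_n(\zeta_N^{ab}))$ to Dirichlet $L$-values and checking that the ``lower-order'' (rational, i.e.\ $\RR(n-1)$-trivial or polynomial-in-$a$) contributions do not spoil the diagonalization. This is where one must be careful about the $\CC/\QQ(n)$ ambiguity and the precise form of the Hurwitz-zeta expansion; alternatively one can sidestep the explicit $L$-value identity and argue abstractly that the Beilinson regulator is known (by \cite{Bo1,Bo2,Ra,Bu}) to be an isomorphism onto a lattice, so any set of $|A|$ elements whose regulator images are ``generic enough'' — here, Galois-conjugate under the $\ZZ/N\ZZ$-action in the prescribed way — must be independent; but the cleanest self-contained route is the $L$-value determinant, so I expect to spend most of the effort there.
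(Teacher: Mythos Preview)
Your approach is essentially the paper's: reduce via Lemma \ref{lem4} and Corollary \ref{cor1} to linear independence of the regulator vectors, diagonalize by Dirichlet characters of parity $(-1)^{n-1}$, and invoke nonvanishing of the resulting $L$-values; the second statement then follows from Galois-averaging exactly as you describe. The only refinement is that your ``lower-order terms / $\CC/\QQ(n)$-ambiguity'' worry is moot (Corollary \ref{cor1} already lands in $\RR(n-1)^{|A|}$ with no ambiguity), while the genuine subtlety is the \emph{imprimitive}-character case, which the paper handles via Zagier's explicit formula $v_1^{\chi}=\tfrac{1}{2M^{n-1}}\bigl\{\prod_{p\mid M}(1-\chi_0(p)p^{n-1})\bigr\}\tau(\chi_0)L(\overline{\chi_0},n)$, each factor being visibly nonzero.
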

\begin{proof}
In view of Lemma \ref{lem4}, for the first statement we need only
check the linear independence of the vectors $\underline{v}^{(b)}$
in Corollary \ref{cor1}. Let $\chi$ be one of the $\tfrac{1}{2}\phi(N)$
Dirichlet characters modulo $N$ with $\chi(-1)=(-1)^{n-1}$; and
let $\rho_{\alpha}:\CC^{|A|}\to\CC^{|A|}$ be the permutation operator
defined by $\mu(\underline{v})_{j}=v_{\alpha\cdot j}$, where $\alpha\in(\ZZ/N\ZZ)^{*}$
is a generator. Then the linear combinations
\[
\underline{v}^{\chi}:=\sum_{b\in A}\chi(b)\underline{v}^{(b)}=\left(\frac{1}{2}\sum_{b=1}^{N}\chi(b)\pi_{n}\left(Li_{n}(e^{\frac{2\pi\ay ab}{N}})\right)\right)_{a\in A}
\]
are independent (over $\CC$) provided they are nonzero, since their
eigenvalues $\overline{\chi(\alpha)}$ under $\rho_{\alpha}$ are
distinct. By the computation in \cite[pp. 420-422]{Za}, if $\chi$
is induced from a primitive character $\chi_{0}$ modulo $N_{0}=N/M$,
then (with $\mu=$ M\"obius function, $\tau(\cdot)=$ Gauss sum)
\[
v_{1}^{\chi}=\frac{1}{2M^{n-1}}\left\{ \sum_{d|M}\mu(d)\chi_{0}(d)d^{n-1}\right\} \tau(\chi_{0})L(\overline{\chi_{0}},n),
\]
the last two factors of which are nonzero by primitivity of $\chi_{0}$;
the bracketed term is $\prod_{\tiny\begin{array}{c}
p>1\text{ prime}\\
p|M
\end{array}}\left(1-\chi_{0}(p)p^{n-1}\right),$ hence also nonzero.

The second statement follows at once, since the composition of $\sum_{\gamma\in\Gamma}$
with $CH^{n}(\mathbb{E},2n-1)_{\QQ}\hookrightarrow CH^{n}(\FF,2n-1)_{\QQ}$
is a multiple of the identity.
\end{proof}

\section{Explicit representatives}

We finally turn to the construction of the cycles described by Theorem
\ref{thm1}. Here the benefit of using $t_{i}^{N}$ (at least, if
one is happy to work rationally) comes to the fore: it allows us to
obtain uniform formulas for all $N$, and to use as few terms as possible;
in fact, it turns out that \emph{for all} $n$ it is possible to take
$\CW_{3}=\cdots=\CW_{n-1}=0$. (While it is easy to argue abstractly
that $\CW_{n-1}$ can always be taken to be zero, this stronger statement
surprised us.) For brevity, we shall use the notation $\left(f_{1}(\underline{t},u,v),\ldots,f_{m}(\underline{t},u,v)\right)$
for 
\[
\left\{ \left.\left(f_{1}(\underline{t},u,v),\ldots,f_{m}(\underline{t},u,v)\right)\right|t_{i},u,v\in\PP^{1}\right\} \cap\square^{m};
\]
all precycles are defined over $\FF=\QQ(\omega)$, and we write $\xi:=\omega^{b}$.

\subsection{$K_{3}$ case $(n=2)$}

Let $\CZ=\left(\tfrac{t}{t-1},1-\xi t,t^{N}\right),$ as dictated
by Theorem \ref{thm1}; then all $\partial_{i}^{\varepsilon}\CZ=0$.
In particular,
\[
\partial_{1}^{0}\CZ=\left(1-\xi t,t^{N}\right)|_{\frac{t}{t-1}=0}=(1,0)=0
\]
and
\[
\partial_{2}^{0}\CZ=\left(\tfrac{\xi^{-1}}{\xi^{-1}-1},\xi^{-N}\right)=\left(\tfrac{1}{1-\xi},1\right)=0.
\]
So we may take $\CW=0$ and $\tilde{\CZ}=\CZ$.

In contrast, if we took $\CZ=\left(\tfrac{t}{t-1},1-\xi t,t\right),$
then $\partial_{2}^{0}\CZ=\left(\tfrac{1}{1-\xi},\xi^{-1}\right)$
and a nonzero $\CW$-term is required.

\subsection{$K_{5}$ case $(n=3)$}

Of course $\CZ=\left(\tfrac{t_{1}}{t_{1}-1},\tfrac{t_{2}}{t_{2}-1},1-\xi t_{1}t_{2},t_{1}^{N},t_{2}^{N}\right).$
Taking
\[
\CW_{1}=\frac{1}{2}\left(\tfrac{t_{1}}{t_{1}-1},\tfrac{1}{1-\xi t_{1}},\tfrac{(u-t_{1}^{N})(u-t_{1}^{-N})}{(u-1)^{2}},t_{1}^{N}u,\tfrac{u}{t_{1}^{N}}\right),
\]
we note that $z_{2}=\tfrac{1}{1-\xi t_{1}}$ $\implies$ $t_{2}=\tfrac{(1-\xi t_{1})^{-1}}{(1-\xi t_{1})^{-1}-1}=\tfrac{1}{\xi t_{1}}$
$\implies$ $f_{2}(t_{1},t_{2})=0.$ Now we have 
\[
\partial\CZ=\partial_{3}^{0}\CZ=\left.\left(\tfrac{t_{1}}{t_{1}-1},\tfrac{t_{2}}{t_{2}-1},t_{1}^{N},t_{2}^{N}\right)\right|_{1-\xi t_{1}t_{2}=0}=\left(\tfrac{t_{1}}{t_{1}-1},\tfrac{1}{1-\xi t_{1}},t_{1}^{N},\tfrac{1}{t_{1}^{N}}\right)
\]
and
\[
\partial\CW_{1}=-\partial_{3}^{\infty}\CW_{1}=-2\cdot\tfrac{1}{2}\left(\tfrac{t_{1}}{t_{1}-1},\tfrac{1}{1-\xi t_{1}},t_{1}^{N},\tfrac{1}{t_{1}^{N}}\right)=-\partial\CZ.
\]
Therefore $\tilde{\CZ}=\CZ+\CW_{1}$ is closed.
\begin{rem}
\label{rem4.2-1}See \cite[$\S$3.1]{Pe} for a detailed discussion
of properties of these cycles, esp. the (integral!) distribution relations
of {[}loc. cit., Prop. 3.1.26{]}.
\end{rem}
In particular, we can specialize to $N=2$ to obtain
\[
2\tilde{\CZ}=2\left(\tfrac{t_{1}}{t_{1}-1},\tfrac{t_{2}}{t_{2}-1},1+t_{1}t_{2},t_{1}^{2},t_{2}^{2}\right)+\left(\tfrac{t_{1}}{t_{1}-1},\tfrac{1}{1+t_{1}},\tfrac{(u-t_{1}^{2})(u-t_{1}^{-2})}{(u-1)^{2}},t_{1}^{2}u,\tfrac{u}{t_{1}^{2}}\right)
\]
in $Z_{\RR}^{3}(\QQ,5)$, spanning $CH^{3}(\QQ,5)_{\QQ}\cong K_{5}(\QQ)_{\QQ}$,
with 
\[
c_{\D,\QQ}(2\tilde{\CZ})=-8Li_{3}(-1)=6\zeta(3)\in\CC/\QQ(3).
\]

\subsection{$K_{7}$ case $(n=4)$}

Set \begin{flalign*}\CZ& = \left(\tfrac{t_{1}}{t_{1}-1},\tfrac{t_{2}}{t_{2}-1},\tfrac{t_{3}}{t_{3}-1},1-\xi t_{1}t_{2}t_{3},t_{1}^{N},t_{2}^{N},t_{3}^{N}\right), \;\;\CW_1 = \tfrac{1}{2} \left( \CW_1^{(1)} + \CW_1^{(2)} \right), \\
\CW_1^{(1)} &= \left(\tfrac{t_{1}}{t_{1}-1},\tfrac{t_{2}}{t_{2}-1},\tfrac{1}{1-\xi t_1 t_2},\tfrac{(u-t_1^N)(u-t_2^N)}{(u-1)(u-t_1^N t_2^N)},\tfrac{u}{t_{1}^N},\tfrac{u}{t_{2}^{N}},\tfrac{1}{u}\right), \\
\CW_1^{(2)} &= \left(\tfrac{t_{1}}{t_{1}-1},\tfrac{t_{2}}{t_{2}-1},\tfrac{1}{1-\xi t_1 t_2},\tfrac{(u-t_1^N)(u-t_2^N)}{(u-1)(u-t_1^N t_2^N)},\tfrac{t_{1}^N}{u},\tfrac{t_{2}^{N}}{u},\tfrac{u}{t_1^N t_2^N}\right), \\
\CW_2 &= -\tfrac{1}{2} \left(  \tfrac{t_1}{t_1 -1},\tfrac{1}{1-\xi t_1} ,\tfrac{(v- t_1^N u)(v-u t_1^{-N})}{(v-u^2)(v-1)} ,\tfrac{(u-t_1^N)(u-v t_1^{-N})}{(u-v)^2} ,\tfrac{vt_1^N}{u} ,\tfrac{v}{t_1^N u} ,\tfrac{u}{v} \right) .
\end{flalign*}Direct computation shows\begin{flalign*}\partial\CZ &=-\partial_{4}^{0}\CZ=-\partial_{4}^{\infty}\CW_{1}^{(1)}=-\partial_{4}^{\infty}\CW_{1}^{(2)}, \\
\partial\CW_1 &= -\tfrac{1}{2} \partial_3^{\infty} \CW_1^{(1)} + \tfrac{1}{2}\partial_4^{\infty}\CW_1^{(1)} - \tfrac{1}{2}\partial_3^{\infty}\CW_1^{(2)} + \tfrac{1}{2}\partial_4^{\infty}\CW_1^{(2)} , \\
\partial \CW_2 &= -\partial_3^{\infty}\CW_2 = \tfrac{1}{2}\partial_3^{\infty}\CW_1^{(1)} + \tfrac{1}{2}\partial_3^{\infty}\CW_1^{(2)} ,
\end{flalign*}which sum to zero.

Alternately, we can take\begin{flalign*}
\CW_1 &= \left( \tfrac{t_1}{t_1 -1}, \tfrac{t_2}{t_2 -1},\tfrac{1}{1-\xi t_1 t_2} ,\tfrac{(u-t_1^N)(u-t_2^N)}{(u-1)(u-t_1^N t_2^N)} ,\tfrac{t_1^N}{u},\tfrac{t_2^N}{u},\tfrac{u}{t_1^N t_2^N}\right) ,\\
\CW_2 &= \left( \tfrac{t_1}{t_1 -1},\tfrac{1}{1-\xi t_1} ,\tfrac{(u-vt_1^N)(u-vt_1^{-N})}{(u-v)^2},\tfrac{vt_1^N}{u},\tfrac{v}{t_1^N u},\tfrac{u}{v},v-1 \right) .
\end{flalign*}Writing\begin{flalign*}
\CV_1 &= \left( \tfrac{t_1}{t_1 -1},\tfrac{t_2}{t_2 -1},\tfrac{1}{1-\xi t_1 t_2}, t_1^N ,t_2^N,\tfrac{1}{t_1^N t_2^N} \right) , \\
\CV_2 &= \left( \tfrac{t_1}{t_1 -1},\tfrac{1}{1-\xi t_1} ,\tfrac{(u-t_1^N)(u-t_1^{-N})}{(u-1)^2},\tfrac{t_1^N}{u},\tfrac{1}{t_1^N u}, u\right) ,
\end{flalign*}one has $\partial\CZ=-\CV_{1}$, $\partial\CW_{1}=-\CV_{2}+\CV_{1}$,
$\partial\CW_{2}=\CV_{2}$; so again $\tilde{\CZ}$ is a closed cycle.

We present the general $n$ construction next, but include the $n=5$
case as an appendix (as the authors only saw the pattern after working
out this case).

\subsection{General $n$ construction $(n\geq4)$}

To state the final result, we define
\[
\CZ:=\left(\tfrac{t_{1}}{t_{1}-1},\ldots,\tfrac{t_{n-1}}{t_{n-1}-1},1-\xi t_{1}\cdots t_{n-1},t_{1}^{N},\ldots,t_{n-1}^{N}\right),
\]
\begin{multline*}
\CW_1 := \tfrac{1}{n-3}\tilde{\CW}_1 :=  \tfrac{(-1)^{n-1}}{n-3}\times \\
\left(\tfrac{t_{1}}{t_{1}-1},\ldots,\tfrac{t_{n-2}}{t_{n-2}-1},\tfrac{1}{1-\xi t_{1}\cdots t_{n-2}},\tfrac{(u-t_{1}^{N})\cdots(u-t_{n-2}^{N})}{(u-t_{1}^{N}\cdots t_{n-2}^{N})(u-1)^{n-3}},\tfrac{t_{1}^{N}}{u},\ldots,\tfrac{t_{n-2}^{N}}{u},\tfrac{u}{t_{1}^{N}\cdots t_{n-2}^{N}}\right) ,
\end{multline*}and
\[
\CW_{2}:=\tfrac{1}{n-3}\sum_{i=1}^{n-1}(-1)^{i-1}\CW_{2}^{(i)},
\]
where for $1\leq i\leq n-2$, $\CW_{2}^{(i)}:=$\begin{multline*}
\left( \tfrac{t_{1}}{t_{1}-1},\ldots,\tfrac{t_{n-3}}{t_{n-3}-1},\tfrac{1}{1-\xi t_{1}\cdots t_{n-3}},\tfrac{(u-t_{1}^{N}v)\cdots(u-t_{n-3}^{N}v)}{(u-t_{1}^{N}\cdots t_{n-3}^{N}v)(u-v)^{n-4}}, \right. \\ \left. \tfrac{vt_{1}^{N}}{u},\ldots,\tfrac{v}{u},\ldots,\tfrac{vt_{n-3}^{N}}{u},\tfrac{u}{vt_{1}^{N}\cdots t_{n-3}^{N}},v-1 \right)
\end{multline*}(with $\tfrac{v}{u}$ occurring in the $(n+i-1)^{\text{th}}$ entry%
\footnote{That is, either before ($i=1$), after ($i=n-2$), or in the middle
of the sequence $\tfrac{vt_{1}^{N}}{u},\tfrac{vt_{2}^{N}}{u},\ldots,\tfrac{vt_{n-3}^{N}}{u}$.%
}) and $\CW_{2}^{(n-1)}:=$\begin{multline*}
\left( \tfrac{t_{1}}{t_{1}-1},\ldots,\tfrac{t_{n-3}}{t_{n-3}-1},\tfrac{1}{1-\xi t_{1}\cdots t_{n-3}},\tfrac{(u-t_{1}^{N}v)\cdots(u-t_{n-3}^{N}v)}{(u-t_{1}^{-N}\cdots t_{n-3}^{-N}v)^{-1}(u-v)^{n-2}}, \right. \\ \left. \tfrac{vt_{1}^{N}}{u},\ldots,\tfrac{vt_{n-3}^{N}}{u},\tfrac{v}{ut_{1}^{N}\cdots t_{n-3}^{N}},\tfrac{u}{v},v-1 \right) .
\end{multline*}
\begin{thm}
\label{thm3}$\tilde{\CZ}=\CZ+\CW_{1}+\CW_{2}$ yields a closed cycle,
with the properties described in Theorem \ref{thm1}. {\rm{(}}In
particular, this recovers the second $K_{7}$ construction and the
$K_{9}$ construction above, for $n=4$ and $5$.{\rm{)}}\end{thm}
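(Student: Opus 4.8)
Beyond Theorem \ref{thm1} what must be established is: (i) that the precycles $\CZ$, $\CW_1$, $\CW_2$ written above are admissible --- each is the graph of a rational map, hence irreducible of the right dimension, and its face intersections are transverse by the evident multiplicative independence of the coordinate functions; moreover, as in Remark \ref{rem3.1-1}, they lie in $Z_{\RR}^{n}(\FF,2n-1)_{\QQ}$; (ii) that $\tilde{\CZ}=\CZ+\CW_1+\CW_2$ is $\partial$-closed; and (iii) that $[\tilde{\CZ}]=\tilde{\Xi}$ in $CH^{n}(\FF,2n-1)_{\QQ}$. I would settle (iii) first, since it is essentially formal once (ii) is in hand. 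By construction $\CZ=\theta(\Z)$. Inspecting the formulas, the first $n-1$ coordinates of $\CW_1$ realize the embedding $\imath_{n-1}\colon\TT^{n-2}\hookrightarrow|(f_{n-1})_0|\subset\square^{n-1}$ --- indeed $z_{n-1}=\tfrac{1}{1-\xi t_1\cdots t_{n-2}}$ is, in the variables $(z_1,\dots,z_{n-1})$, precisely the equation $f_{n-1}=0$ --- so that $\CW_1=\theta(\imath_*\T_1')$ with $\T_1'\in Z^{n-1}(\TT^{n-2},n)_{\QQ}^{\#}$ recorded by its remaining $n+1$ coordinates; likewise $\CW_2=\theta(\imath_*\T_2')$, with $\T_2'$ supported on the codimension-one face $\{z_{n-1}=\infty\}\cap|(f_{n-1})_0|\cong|(f_{n-2})_0|$, encoded here by $z_{n-2}=\tfrac{1}{1-\xi t_1\cdots t_{n-3}}$. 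Thus each $\CW_i$ is supported on $\mathsf{p}_i^{-1}|(f_{n-i})_0|$ as Theorem \ref{thm1} requires, and $\CW_3=\dots=\CW_{n-1}=0$. Setting $\W:=\imath_*(\T_1'+\T_2')$ and $\tilde{\Z}:=\Z+\W$, the boundary computation of step (ii) --- carried out face by face, i.e.\ at the level of the double complex $\mathfrak{Z}^{n}_{\square}(\bullet)$ --- combined with the known relation $\mathbb{D}\Z=\imath_*\T$ where $\T=\{t_1^N,\dots,t_{n-1}^N\}$, yields $\mathbb{D}(\T_1'+\T_2')=-\T$; hence $\tilde{\Z}$ is precisely a lift of the kind furnished by the recipe preceding Theorem \ref{thm1}, and $[\tilde{\CZ}]=[\theta(\tilde{\Z})]=\tilde{\Xi}$ by Proposition \ref{prop2.5-1} and Lemma \ref{lem6}.

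The crux is (ii). I would first name the two ``building blocks'' that recur in the boundary faces,
\[
\CV_1:=\Bigl(\tfrac{t_1}{t_1-1},\dots,\tfrac{t_{n-2}}{t_{n-2}-1},\tfrac{1}{1-\xi t_1\cdots t_{n-2}},t_1^N,\dots,t_{n-2}^N,\tfrac{1}{t_1^N\cdots t_{n-2}^N}\Bigr)\in Z^n(\FF,2n-2)_{\QQ}
\]
and its one-parameter-richer companion $\CV_2$ (carrying an extra coordinate $u$), exactly as in the second $K_7$ construction above and the appendix, and then prove the three boundary identities for $\partial\CZ$, $\partial\CW_1$, $\partial\CW_2$ which, re-expressed in terms of $\CV_1$ and $\CV_2$, telescope to $\partial\tilde{\CZ}=0$: for $n=4$ they read $\partial\CZ=-\CV_1$, $\partial\CW_1=\CV_1-\CV_2$, $\partial\CW_2=\CV_2$, and the general case differs only by an overall sign dictated by the $(-1)^{n-1}$, $\tfrac{1}{n-3}$ and $(-1)^{i-1}$ normalizations. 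Each identity is a finite, face-by-face verification. For $\partial\CZ$: every face $\partial_i^{\varepsilon}\CZ$ with $i\neq n$ is empty, the relevant substitution pushing some coordinate to the excluded value $1\in\PP^1$ (e.g.\ $t_i=0$ forces the coordinate $1-\xi t_1\cdots t_{n-1}$ to equal $1$, while $t_i=1$ forces $t_i^N=1$); only $\partial_n^0$ survives, and on it $t_{n-1}=(\xi t_1\cdots t_{n-2})^{-1}$, so substitution (using $\xi^N=1$) returns $\CV_1$. For $\partial\CW_1$: again most faces are empty, and among the survivors the one on which $\tfrac{(u-t_1^N)\cdots(u-t_{n-2}^N)}{(u-t_1^N\cdots t_{n-2}^N)(u-1)^{n-3}}$ degenerates at $u=1$ produces a $\CV_1$-type term, that at $u=t_1^N\cdots t_{n-2}^N$ a $\CV_2$-type term --- the factor $\tfrac{1}{n-3}$ being exactly what absorbs the multiplicity contributed by the power $(u-1)^{n-3}$. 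For $\partial\CW_2=\tfrac{1}{n-3}\sum_i(-1)^{i-1}\partial\CW_2^{(i)}$: the alternating sum is rigged so that all faces moving a coordinate of the ``$u$--$v$ block'' ($\tfrac{vt_j^N}{u}$, $\tfrac{v}{u}$, $\tfrac{u}{v}$) cancel in consecutive pairs, leaving only the $\CV_2$-type contribution coming from the $v$-faces.

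The main obstacle is precisely this step (ii) --- not any single computation but the bookkeeping. For each of the $2(2n-1)$ faces of $\CZ$, of $\CW_1$, and of each $\CW_2^{(i)}$, one must decide whether it is empty (some coordinate illegally hitting $0$, $1$, or $\infty$), and if not identify the surviving cycle together with its multiplicity, and then watch the alternating sum over $i$ telescope against the faces of $\CW_1$ and the single surviving face of $\CZ$; getting every normalizing constant and every sign to line up is where essentially all of the effort lies. The $n=4$ display above and the appendix ($n=5$) are the templates, and once the pattern of nonempty faces has been identified the general case becomes a lengthy but mechanical, induction-free verification. A pleasant by-product, worth recording along the way, is that no correction terms $\CW_{\geq 3}$ are ever required --- which is stronger than the a priori fact that $\CW_{n-1}$ alone may always be taken to vanish.
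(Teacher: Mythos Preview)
Your treatment of (iii) and of the support conditions is fine and matches the paper. The gap is in (ii): your picture of the boundary computation is an over-extrapolation from $n=4$ that is already wrong at $n=5$.

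First, your analysis of $\partial\CW_1$ misidentifies the surviving faces. The pole of the $n^{\text{th}}$ coordinate at $u=t_1^N\cdots t_{n-2}^N$ contributes nothing, since on that locus the last coordinate $\tfrac{u}{t_1^N\cdots t_{n-2}^N}$ equals $1$ and the cycle exits $\square^{2n-1}$. The actual nonzero faces of $\tilde\CW_1$, besides the $(n{-}3)$-fold one at $u=1$, come from the \emph{first} $n-1$ positions (setting $t_i=1$ for $i\leq n-2$, or $\xi t_1\cdots t_{n-2}=1$ for $i=n-1$), and they are $n-1$ \emph{distinct} cycles $\CY_1,\dots,\CY_{n-1}$ (the $\CU_2,\dots,\CU_5$ of the appendix when $n=5$), not a single ``$\CV_2$''. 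In the paper's bookkeeping,
\[
\partial\tilde\CW_1=(-1)^n(n-3)\,\CY_0+\sum_{i=1}^{n-1}(-1)^i\CY_i,\qquad \CY_0:=\partial_n^0\CZ,\quad\CY_i:=\partial_{2n-1}^0\CW_2^{(i)},
\]
and each $\CY_i$ ($i\geq1$) is cancelled termwise by the $v=1$ face of the matching $\CW_2^{(i)}$. So ``the general case differs only by an overall sign'' is false.

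Second --- and this is the missing idea --- each $\partial\CW_2^{(i)}$ carries, besides $\CY_i$, residual terms $\CX_{i,j}:=\partial_j^{\infty}\CW_2^{(i)}$ for $j=1,\dots,n-2$, arising from the $t$-block (positions $1,\dots,n-2$), not from the $u$--$v$ block as you suggest. After the $\CY$'s drop out one is left with
\[
\partial\tilde\CZ=\tfrac{1}{n-3}\sum_{i=1}^{n-1}\sum_{j=1}^{n-2}(-1)^{i+j-1}\CX_{i,j},
\]
and the crux of the construction is the symmetry $\CX_{i,j}=\CX_{j,i-1}$ for $i>j$, which pairs off all the summands. (For $n=5$ this is exactly the recurrence of $\CV_1,\CV_2,\CV_3$ between $\partial\CW_2^{(1)},\partial\CW_2^{(2)},\partial\CW_2^{(3)}$ and $\partial\CW_2^{(4)}$ in the appendix.) This pairing is not a ``consecutive-pair'' cancellation in the $u$--$v$ coordinates, and it is precisely the combinatorial identity your plan lacks.
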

\begin{proof}
Writing
\[
\CY_{0}:=\partial_{n}^{0}\CZ=\left(\tfrac{t_{1}}{t_{1}-1},\ldots,\tfrac{t_{n-2}}{t_{n-2}-1},\tfrac{1}{1-\xi t_{1}\cdots t_{n-2}},t_{1}^{N},\ldots,t_{n-2}^{N},\tfrac{1}{t_{1}^{N}\cdots t_{n-2}^{N}}\right),
\]
$\CY_{i}:=\partial_{2n-1}^{0}\CW_{2}^{(i)}$ ($i=1,\ldots,n-1$),
and $\CX_{i,j}:=\partial_{j}^{\infty}\CW_{2}^{(i)}$ ($j=1,\ldots,n-2$),
one computes that $\partial\CZ=(-1)^{n-1}\CY_{0}$, 
\[
\partial\tilde{\CW}_{1}=(-1)^{n}\partial_{n}^{\infty}\tilde{\CW}_{1}+\sum_{i=1}^{n-1}(-1)^{i}\partial_{i}^{\infty}\tilde{\CW}_{1}=(-1)^{n}(n-3)\CY_{0}+\sum_{i=1}^{n-1}(-1)^{i}\CY_{i},
\]
and $\partial\CW_{2}^{(i)}=\CY_{i}+\sum_{j=1}^{n-2}(-1)^{j}\CX_{i,j}.$
We have therefore\begin{equation}\label{eqn4.5-1}\partial \tilde{\CZ} = \frac{1}{n-3}\sum_{i=1}^{n-1}\sum_{j=1}^{n-2}(-1)^{i+j-1}\CX_{i,j},
\end{equation}and for each $i>j$ the reader will verify that $\CX_{i,j}=\CX_{j,i-1}$,
so that the terms on the right-hand side of \eqref{eqn4.5-1} cancel
in pairs.
\end{proof}

\subsection{Expected implications for torsion\label{torsion}}

One of the anticipated applications of the explicit $AJ$ maps of
\cite{KLM,KL} has been the detection of torsion in higher Chow groups.
While they provide an explicit map of complexes from $Z_{\RR}^{p}(X,\bullet)$
to the \emph{integral} Deligne cohomology complex, the fact that $Z_{\RR}^{p}(X,\bullet)\subset Z^{p}(X,\bullet)$
is only a \emph{rational} quasi-isomorphism leaves open the possibility
that a given cycle with (nontrivial) torsion KLM-image is bounded
by a precycle in the larger complex. So far, therefore, any conclusions
we can try to draw about torsion are speculative, as they depend on
the (so far) conjectural extension of the KLM map to an \emph{integrally}
quasi-isomorphic subcomplex.

Let us describe what the existence of such an extension, together
with the cycles just constructed, would yield. Let $f:\,\ZZ/N\ZZ\to\ZZ$
be a function which is zero off $(\ZZ/N\ZZ)^{*}$, with $f(-b)=(-1)^{n}f(b)$,
and write 
\[
\varepsilon_{n}:=\left\{ \begin{array}{cc}
1, & n=2\\
2, & n=3\\
n-3, & n\geq4.
\end{array}\right.
\]
Then (fixing $\sigma(\omega)=\zeta_{N}=e^{\frac{2\pi\mathbf{i}}{N}}$)
the cycle 
\[
Z_{f}^{n}(N):=\varepsilon_{n}\sum_{b=0}^{N-1}f(b)\sigma(\tilde{\mathscr{Z}}_{b})\in Z_{\RR}^{n}(\QQ(\zeta_{N}),2n-1)
\]
is integral. Working up to sign, we compute (in $\CC/\ZZ$) by Theorem
\ref{thm2}\begin{flalign*}
\tau_f^n(N) &:= \tfrac{\pm 1}{(2\pi \mathbf{i})^n} c_{\mathcal{D}}(Z_f^n(N)) \\ &= \tfrac{\pm\varepsilon_n N^{n-1}}{(2\pi \mathbf{i})^n} \sum _{b=0}^{N-1} f(b) \sum_{k\geq 1}\tfrac{\zeta_N^{kb}}{k^n} \\
&= \tfrac{\pm\varepsilon_n N^{n-1}}{2 (2\pi \mathbf{i})^n}\sum_{b=0}^{N-1} f(b) \sum_{k\in \mathbb{Z} \setminus\{0\}}\tfrac{\zeta_N^{kb}}{k^n} \\ &= \tfrac{\pm\varepsilon_n N^{n-1}}{2\cdot n!} \sum_{b=0}^{N-1} f(b) B_n(\tfrac{b}{N}), \\
\end{flalign*}which is evidently a rational number.%
\footnote{$B_{n}(x)=\sum_{j=0}^{n}{n \choose j}B_{j}x^{n-j}$ is the $n^{\text{th}}$
Bernoulli polynomial (and $\{B_{j}\}$ the Bernoulli numbers)%
} This (nonconjecturally) establishes that $Z_{f}^{n}(N)$ is torsion.
Under our working (conjectural!) hypothesis, if $\tau_{f}^{n}(N)=\pm\tfrac{A_{f}^{n}(N)}{C_{f}^{n}(N)}$
in lowest form, we may additionally conclude that the order of $Z_{f}^{n}(N)$
is a multiple of $C_{f}^{n}(N)$.

For example, taking $N=5$, $n=2$, and $f(1)=f(4)=1$, $f(2)=f(3)=0$,
we obtain $Z_{f}^{2}(5)\in Z_{\RR}^{2}(\QQ(\sqrt{5}),3)$ with $\tau_{f}^{2}(5)=\tfrac{\pm1}{120}$.
This checks out with what is known (cf. Prop. 6.9 and Remark 6.10
of \cite{Pe2}), and would make $Z_{f}^{2}(5)$ a generator of $CH^{2}(\QQ(\sqrt{5}),3)$. 

For $N=2$, $f(1)=1$, and $n=2m$ (i.e. $CH^{2m}(\QQ,4m-1)$), the
above computation simplifies to\begin{flalign*}
|\tau_f^{2m}(2)| &= \tfrac{\pm \varepsilon_{2m} 2^{2m-2}}{(2m)!} B_{2m}(\tfrac{1}{2}) \\
&= \tfrac{\pm (2m-3) (2^{2m-1}-1)}{2(2m)!} B_{2m},
\end{flalign*}which yields $\tfrac{1}{24}$, $\tfrac{7}{1440}$, $\tfrac{31}{20160}$,
$\tfrac{635}{483840}$ for $m=1,2,3,4$. It is known that $CH^{2}(\QQ,3)\cong\ZZ/24\ZZ$
{[}op. cit.{]}, but the other orders seem unexpectedly large and should
warrant further investigation.

\appendix

\section{$K_{9}$ case ($n=5$)}

Begin by writing\begin{flalign*}
\CZ &= \left( \tfrac{t_1}{t_1 - 1}, \tfrac{t_2}{t_2 - 1}, \tfrac{t_3}{t_3 - 1},\tfrac{t_4}{t_4 - 1},1 - \xi t_1 t_2 t_3 t_4,  t_1^N, t_2^N,t_3^N, t_4^N \right) , \\ 
\CW_1 &= \tfrac{1}{2}\left(\tfrac{t_1}{t_1 - 1}, \tfrac{t_2}{t_2 - 1}, \tfrac{t_3}{t_3 - 1},\tfrac{1}{1 - \xi t_1t_2t_3}, \tfrac{(u - t_1^l)(u - t_2^l)(u - t_3^l)}{(u - 1)^2(u - t_1^lt_2^lt_3^l)}, \tfrac{t_1^N}{u}, \tfrac{t_2^N}{u}, \tfrac{t_3^N}{u} ,\tfrac{u}{t^N_1 t^N_2 t^N_3}\right),\\ 
\CW_2^{(1)} &= \left(\tfrac{t_1}{t_1 - 1}, \tfrac{t_2}{t_2 - 1}, \tfrac{1}{1 - \xi t_1t_2}, \tfrac{(u - t_1^N v)(u - t_2^N v)}{(u - t_1^N t_2^N v)(u - v)}, \tfrac{v}{u}, \tfrac{t_1^N v}{u}, \tfrac{t_2^N v}{u}, \tfrac{u}{v t_1^N t_2^N}, v-1\right) ,\\
\CW_2^{(2)} &= \left(\tfrac{t_1}{t_1 - 1}, \tfrac{t_2}{t_2 - 1}, \tfrac{1}{1 - \xi t_1t_2}, \tfrac{(u - t_1^N v)(u - t_2^N v)}{(u - t_1^N t_2^N v)(u - v)}, \tfrac{v t_1^N}{u}, \tfrac{ v}{u}, \tfrac{t_2^N v}{u}, \tfrac{u}{v t_1^N t_2^N}, v-1\right) ,\\ 
\CW_2^{(3)} &= \left(\tfrac{t_1}{t_1 - 1}, \tfrac{t_2}{t_2 - 1}, \tfrac{1}{1 - \xi t_1t_2}, \tfrac{(u - t_1^N v)(u - t_2^N v)}{(u - t_1^N t_2^N v)(u - v)}, \tfrac{vt_1^N}{u}, \tfrac{ v t_2^N}{u}, \tfrac{v}{u}, \tfrac{u}{v t_1^N t_2^N}, v-1\right) ,\\ 
\CW_2^{(4)} &= \left(\tfrac{t_1}{t_1 - 1}, \tfrac{t_2}{t_2 - 1}, \tfrac{1}{1 - \xi t_1t_2}, \tfrac{(u - t_1^N v)(u - t_2^N v)}{(u - v t_1^{-N}t_2^{-N})^{-1} (u - v)^3}, \tfrac{vt_1^N}{u}, \tfrac{ v t_2^N}{u},  \tfrac{v}{u t_1^N t_2^N}, \tfrac{u}{v},  v-1 \right) ,\\ 
\CW_2 &= \tfrac{1}{2}\left( \CW_2^{(1)}-\CW_2^{(2)}+\CW_2^{(3)} - \CW_2^{(4)}\right) .
\end{flalign*} To compute the boundaries, introduce\begin{flalign*} 
\CU_1 &= \left(\tfrac{t_1}{t_1 - 1}, \tfrac{t_2}{t_2 - 1}, \tfrac{t_3}{t_3 - 1},\tfrac{1}{1 - \xi t_1 t_2 t_3} ,t_1^N , t_2^N ,t_3^N , \tfrac{1}{t_1^N t_2^N t_3^N}\right) ,\\ 
\CU_2 &= \left( \tfrac{t_1}{t_1 - 1}, \tfrac{t_2}{t_2 - 1}, \tfrac{1}{1 - \xi t_1 t_2}, \tfrac{(u - t_1^N )(u - t_2^N )}{(u - t_1^N t_2^N )(u - 1)}, \tfrac{1}{u}, \tfrac{t_1^N }{u}, \tfrac{t_2^N }{u}, \tfrac{u}{t_1^N t_2^N}\right) ,\\
\CU_3 &= \left( \tfrac{t_1}{t_1 - 1}, \tfrac{t_2}{t_2 - 1}, \tfrac{1}{1 - \xi t_1 t_2}, \tfrac{(u - t_1^N )(u - t_2^N )}{(u - t_1^N t_2^N )(u - 1)}, \tfrac{t_1^N}{u}, \tfrac{1}{u}, \tfrac{t_3^N}{u}, \tfrac{u}{ t_1^N t_2^N}\right) ,\\
\CU_4 &= \left(\tfrac{t_1}{t_1 - 1}, \tfrac{t_2}{t_2 - 1}, \tfrac{1}{1 - \xi t_1 t_2}, \tfrac{(u - t_1^N )(u - t_2^N )}{(u - t_1^N t_2^N )(u - 1)}, \tfrac{t_1^N}{u}, \tfrac{  t_2^N}{u}, \tfrac{ 1}{u}, \tfrac{u}{ t_1^N t_2^N}\right) ,\\
\CU_5 &= \left(\tfrac{t_1}{t_1 - 1}, \tfrac{t_2}{t_2 - 1}, \tfrac{1}{1 - \xi t_1 t_2}, \tfrac{(u - t_1^N )(u - t_2^N )(u - t_1^{-N} t_2^{-N})}{(u - 1)^3}, \tfrac{t_1^N}{u}, \tfrac{t_2^N}{u}, \tfrac{1}{u t_1^N t_2^N}, u \right)  
\end{flalign*}and\begin{flalign*} 
\CV_1 &= \left(\tfrac{t_1}{t_1 - 1},  \tfrac{1}{1 - \xi t_1}, \tfrac{(u - t_1^N v)(u - t_1^{-N} v)}{(u -  v)^2}, \tfrac{v}{u}, \tfrac{t_1^N v}{u}, \tfrac{v}{u t_1^N}, \tfrac{u}{v}, v-1\right), \\ 
\CV_2 &= \left(\tfrac{t_1}{t_1 - 1}, \tfrac{1}{1 - \xi t_1}, \tfrac{(u - t_1^N v)(u - t_1^{-N} v)}{(u -  v)^2}, \tfrac{vt_1^N}{u}, \tfrac{v}{u}, \tfrac{v}{t_1^N u}, \tfrac{u}{v}, v-1\right), \\ 
\CV_3 &= \left(\tfrac{t_1}{t_1 - 1}, \tfrac{1}{1 - \xi t_1}, \tfrac{(u - t_1^N v)(u - t_1^{-N} v)}{(u -  v)^2}, \tfrac{vt_1^N}{u}, \tfrac{v}{u t_1^N}, \tfrac{v}{u}, \tfrac{u}{v}, v-1\right) .
\end{flalign*}Then $\partial\CZ=\CU_{1}$, $\partial\CW_{1}=-\CU_{1}+\tfrac{1}{2}\left(-\CU_{2}+\CU_{3}-\CU_{4}+\CU_{5}\right)$,
$\partial\CW_{2}^{(1)}=-\CV_{1}+\CU_{2}$, $\partial\CW_{2}^{(2)}=-\CV_{2}+\CU_{3}$,
$\partial\CW_{2}^{(3)}=-\CV_{3}+\CU_{4}$, and $\partial\CW_{2}^{(4)}=\CU_{5}-\CV_{1}+\CV_{2}-\CV_{3}$;
and so $\tilde{\CZ}$ is closed.

As for $n=3$, we obtain a generator for $CH^{5}(\QQ,9)_{\QQ}\cong K_{9}(\QQ)_{\QQ}$
by setting $N=2$ and $\xi=-1$; the integral cycle $2\tilde{\CZ}$
has $c_{\D,\QQ}(2\tilde{\CZ})=15\zeta(5)$.

\curraddr{${}$\\
\noun{Department of Mathematics, Campus Box 1146}\\
\noun{Washington University in St. Louis}\\
\noun{St. Louis, MO} \noun{63130, USA}}

\email{${}$\\
\emph{e-mail}: matkerr@math.wustl.edu}

\curraddr{\noun{${}$}\\
\noun{Department of Mathematics, Campus Box 1146}\\
\noun{Washington University in St. Louis}\\
\noun{St. Louis, MO} \noun{63130, USA}}

\email{\emph{${}$}\\
\emph{e-mail}: yyang@math.wustl.edu}
\end{document}